\newtheorem{theorem}{Theorem}
\newtheorem{lemma}{Lemma}
\newtheorem{proposition}{Proposition}
\newtheorem{definition}{Definition}
	\newtheorem{corollary}{Corollary}
\newtheorem{claim}{Claim}
 \theoremstyle{definition}
 \theoremstyle{remark}
 \numberwithin{equation}{section}
\newcommand{\vertiii}[1]{{\left\vert\kern-0.25ex\left\vert\kern-0.25ex\left\vert #1
    \right\vert\kern-0.25ex\right\vert\kern-0.25ex\right\vert}}
\renewcommand{\[}{\left[}
\newcommand{\f}[2]{\frac{#1}{#2}}
\newcommand{\cl}{{\mathcal L}}
\newcommand{\ga}{\gamma}
\newcommand{\la}{\lambda}
\newcommand{\La}{\Lambda}
\newcommand{\si}{\sigma}
\newcommand{\vp}{\varphi}
\newcommand{\om}{\omega}
\newcommand{\rone}{\mathbb R}
\newcommand{\dpr}[2]{\langle #1,#2 \rangle}
\newcommand{\cm}{\mathcal M}
\newcommand{\cd}{\mathcal D}
\newcommand{\ch}{\mathcal H}
\newcommand{\p}{\partial}
\newcommand{\beq}{\begin{equation}}
\newcommand{\eeq}{\end{equation}}
\newcommand{\beqna}{\begin{eqnarray*}}
\newcommand{\eeqna}{\end{eqnarray*}}
\newcommand{\beqn}{\begin{equation*}}
\newcommand{\eeqn}{\end{equation*}}
\newcommand{\bp}{\begin{proof}}
\newcommand{\ep}{\end{proof}}
\newcommand{\bprop}{\begin{proposition}}
\newcommand{\eprop}{\end{proposition}}
\newcommand{\bt}{\begin{theorem}}
\newcommand{\et}{\end{theorem}}
\newcommand{\bex}{\begin{Example}}
\newcommand{\eex}{\end{Example}}
\newcommand{\bc}{\begin{corollary}}
\newcommand{\ec}{\end{corollary}}
\newcommand{\bcl}{\begin{claim}}
\newcommand{\ecl}{\end{claim}}
\newcommand{\bl}{\begin{lemma}}
\newcommand{\el}{\end{lemma}}
\newcommand{\cj}{{\mathcal J}}
\newcommand{\ci}{{\mathcal I}}
\begin{document}

	\subjclass[2000]{Primary  37K45, 35B35, 35Q55}

	\keywords{spectral  stability, periodic  waves,  Zakharov system}

	\title{Spectral stability  of periodic waves for the
		Zakharov system}

	\author{Sevdzhan Hakkaev} 
	\address{Department of Mathematics, Faculty of Science, Trakya University, \\ 22030 Edirne, Turkey and \\ Institute of Mathematics and Informatics, \\ Bulgarian Academy of Sciences, Acad. G. Bonchev Str. bl. 8, 1113 Sofia, Bulgaria}
\email{s.hakkaev@shu.bg}
	
	\author[Milena Stanislavova]{\sc Milena Stanislavova}
	\address{Department of Mathematics, University of Alabama - Birmingham,
		1402 10th Avenue South
		Birmingham AL 35294, USA
	}
	\email{mstanisl@uab.edu}
\author[Atanas G. Stefanov]{\sc Atanas G. Stefanov}
\address{Department of Mathematics, University of Alabama - Birmingham,
	1402 10th Avenue South
	Birmingham AL 35294, USA
}
\email{stefanov@uab.edu}\footnote{Corresponding author}

	\date{}
	\maketitle

\date{\today}

\begin{abstract}
	The paper concerns with  the stability of periodic travelling waves of dnoidal type, 
	of the Zakharov system. This problem was considered in a work of J. Angulo and C. Brango, Nonlinearity {\bf 24}, 2913 (2011).  Specifically, it was shown that under  a technical condition on the perturbation, such waves are orbitally stable, with respect to perturbations of the same period.  Our main result fills up the gap created by the aforementioned technical condition. 
	More precisely, we show that  for all natural values of the parameters, the
periodic dnoidal waves are spectrally stable.

\end{abstract}

\maketitle

  \section{Introduction}
 We consider the  Zakharov system, which is the following system of coupled nonlinear PDE's 
  \begin{equation}\label{1.1}
  	\left\{ \begin{array}{ll}
  		v_{tt}-v_{xx}=\frac{1}{2}(|u |^2)_{xx} \\
  		\\
  		iu_t+u_{xx}-uv =0,
  	\end{array} \right.
  \end{equation}
 Specifically, $v$ is a real-valued  function, while  $u$ is a
  complex-valued function. The problem (\ref{1.1})  was introduced in
  \cite{Za} to describe Langmuir turbulence in a plasma.

  The problem of the stability of
  solitary waves for nonlinear dispersive equations goes back to the
  works of Benjamin \cite{Be} and Bona \cite{Bo} (see also
  \cite{AlBoHe, AlBo, We1, We2}). A general approach for
  investigating the stability of solitary waves for nonlinear
  equations having a group of symmetries was proposed in
  \cite{GrShSt1}.
  The well-posedness theory for Zakharov system in the periodic
  setting was investigated in \cite{GuSh}.
  In \cite{AB, Wu} the existence and stability of smooth solitary wave solutions is
  considered, in fact we state for reference purposes the precise stability results of \cite{AB} below.  

  The goal  of this paper is to consider the spectral stability of periodic travelling wave solutions of
  the form
  \begin{equation}\label{1.2}
  	\left\{
  	\begin{array}{ll}
  		v(t,x)=\psi(x-ct) \\
  		\\
  		u(t,x)=e^{-i\om
  			t}e^{i{\frac{c}{2}}(x-ct)}\phi (x-ct),
  	\end{array} \right.
  \end{equation}
  where $\psi, \;  \phi :
  \rone \rightarrow \rone$ are smooth,periodic
  functions with fixed period $2T$, and $\om, c \in \rone$. In order to ensure that the traveling wave $u$ above is $2T$ periodic, we will require that there is an integer $l$, so that
  \begin{equation}
  	\label{a:10}
  	c = \f{2 \pi l}{T}.
  \end{equation}
  We now construct such waves.
  \subsection{Construction of the periodic waves for the Zakharov system}
  Substituting (\ref{1.2}) in (\ref{1.1}) we obtain
  \begin{equation}\label{2.1}
  	\left\{ \begin{array}{ll}
  		(c^2-1)\psi''=\frac{1}{2}(\phi^2)''\\
  		\\
  		\phi''+\left( w+\frac{c^2}{4} \right)\phi=\phi\psi
  	\end{array} \right.
  \end{equation}
  Integrating the first equation in (\ref{2.1}), we get
   $$
  	\psi=-\frac{\phi^2}{2(1-c^2)}+a_0+b_0 x
 $$
 By the periodicity of $\phi, \psi$, we immediately conclude $b_0=0$. For the rest, we also consider $a_0=0$, as the other cases easily reduce, without loss of generality,  to this one by a simple change of parameters, see the defining equations \eqref{1.1}. That is,
  \begin{equation}
  	\label{2.3}
 	\psi=-\frac{\phi^2}{2(1-c^2)}
 \end{equation}
Using the relation \eqref{2.3} in the second equation of \eqref{2.1}, we get the following equation for $\phi$,
  \begin{equation}\label{2.4}
  	-\phi''+\sigma \phi-\frac{\phi^3}{2(1-c^2)}=0,
  \end{equation}
  where we have introduced the new parameter $\sigma:=-\om-\frac{c^2}{4}$.
 Multiplying by $\phi$ and integrating once, we get
  \begin{equation}\label{2.5}
  	\phi'^2=\frac{1}{4(1-c^2)}\left[ -\phi^4+4\sigma (1-c^2)\phi^2+a_1\right] ,
  \end{equation}
  where $a_1$ is a constant of integration. This is a Newton's equation, which is well-studied in the literature. In fact, one can construct several different type of solutions in terms of elliptic functions, including dnoidal, cnoidal and even snoidal solutions. Unfortunately, our preliminary  results for the cnoidal and snoidal type waves are far from definitive, so we will restrict our attention to the dnoidal waves. The cnoidal and snoidal waves will be a subject of a  future publication. 
  
  Next, we present the construction of the dnoidal waves.  
 Later, we  will state some relevant spectral properties of the corresponding linearized operator, as they will be essential for our considerations in the sequel.
  
  Let  $1-c^2>0$ and $\sigma>0$. Assume that the quadratic  equation
  $r^2 -4\sigma (1-c^2)r -a_1= 0$ has two positive roots $r_0>r_1>0$, and set $\phi_0=\sqrt{r_0} >\phi_1=\sqrt{r_1} > 0$. Clearly, there is an even and decreasing in $[0,T]$ peiodic solution of \eqref{2.5}, with 
  $$
  \phi(0)=\max_{0<x<T} \phi(x)=\phi_0, \phi(T)=\min_{0<x<L} \phi(x)=\phi_1.
  $$
   These are explicitly given, up to a translation,  as follows.
  \begin{proposition}(Existence of dnoidal solutions)
  	\label{prop:1}
  	
  	Let $1-c^2>0, \sigma>0$. Assume that the quadratic  equation
  	$r^2 -4\sigma (1-c^2)r -a_1= 0$ has two positive roots, denoted by $\phi_0^2>\phi_1^2$.  Then, the solution to \eqref{2.5} is given  by
  	\begin{equation}\label{4.1}
  		\phi(x)=\phi_0dn(\alpha x, \kappa),
  	\end{equation}
  	where
  	\begin{equation}\label{4.2}
  		\kappa^2=\frac{\phi_0^2-\phi_1^2}{\phi_0^2}=\frac{2\phi_0^2-4\sigma (1-c^2)}{\phi_0^2}, \; \; \alpha^2=\frac{1}{4(1-c^2)}\phi_0^2=\frac{\sigma}{2-\kappa^2}.
  	\end{equation}
  	In addition, the fundamental period of $\phi$ is
  	$$
  	2T=\frac{2K(k)}{\alpha}.
  	$$
  \end{proposition}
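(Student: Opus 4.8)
The plan is to verify the explicit formula directly, relying on the defining differential identity for the Jacobi elliptic function $dn$ and then invoking uniqueness for the first-order ODE \eqref{2.5}. Recall that $D(u) = dn(u,\kappa)$ satisfies $D' = -\kappa^2 sn(u) cn(u)$ together with $sn^2 + cn^2 = 1$ and $dn^2 + \kappa^2 sn^2 = 1$; eliminating $sn$ and $cn$ yields the autonomous relation
\begin{equation*}
(D')^2 = (1-D^2)\bigl(D^2-(1-\kappa^2)\bigr) = -D^4 + (2-\kappa^2)D^2 - (1-\kappa^2).
\end{equation*}
First I would substitute the ansatz $\phi(x) = \phi_0 D(\alpha x)$ into \eqref{2.5}. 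Writing $\phi^2 = \phi_0^2 D^2$ and $\phi'^2 = \phi_0^2\alpha^2 (D')^2$, the identity above turns the left-hand side into a polynomial in $D^2$, and matching the coefficients of $D^4$, $D^2$, and the constant term against $\frac{1}{4(1-c^2)}[-\phi^4 + 4\sigma(1-c^2)\phi^2 + a_1]$ produces exactly three scalar equations.

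The three matching conditions read $\alpha^2 = \frac{\phi_0^2}{4(1-c^2)}$ (from $D^4$), $\alpha^2(2-\kappa^2) = \sigma$ (from $D^2$), and $\phi_0^4(1-\kappa^2) = -a_1$ (from the constant). The first reproduces the stated value of $\alpha^2$, and combined with the second it gives $\alpha^2 = \frac{\sigma}{2-\kappa^2}$ as claimed. To close the system I would bring in Vieta's relations for the roots of $r^2 - 4\sigma(1-c^2)r - a_1 = 0$, namely $\phi_0^2 + \phi_1^2 = 4\sigma(1-c^2)$ and $\phi_0^2\phi_1^2 = -a_1$. The sum relation converts $\kappa^2 = \frac{\phi_0^2-\phi_1^2}{\phi_0^2}$ into $\frac{2\phi_0^2 - 4\sigma(1-c^2)}{\phi_0^2}$, giving the two equivalent expressions for $\kappa^2$, and it also makes the $D^2$-condition self-consistent. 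The product relation yields $1-\kappa^2 = \phi_1^2/\phi_0^2$, whence $\phi_0^4(1-\kappa^2) = \phi_0^2\phi_1^2 = -a_1$, verifying the constant-term condition.

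Having checked that the candidate solves \eqref{2.5}, I would pin it down as \emph{the} solution by matching data at $x=0$: since $dn(0,\kappa)=1$ is the maximum of $dn$ and $(dn)'(0)=0$, the candidate satisfies $\phi(0) = \phi_0 = \max \phi$ and $\phi'(0)=0$, which are precisely the conditions singling out the even, decreasing-on-$[0,T]$ orbit described before the proposition; uniqueness for the first-order autonomous equation (away from the simple turning points $\phi=\phi_0,\phi_1$, where the quartic $-(\phi^2-\phi_0^2)(\phi^2-\phi_1^2)$ has simple zeros) then identifies the two. Finally, the period statement follows because $dn(\cdot,\kappa)$ has fundamental period $2K(\kappa)$ and $dn(K(\kappa),\kappa) = \sqrt{1-\kappa^2} = \phi_1/\phi_0$, so that $u=\alpha x$ traverses one period as $x$ runs over an interval of length $2K(\kappa)/\alpha$, with $\phi(T) = \phi_0\sqrt{1-\kappa^2} = \phi_1$; hence $2T = 2K(\kappa)/\alpha$.

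The computation is essentially bookkeeping, so I do not anticipate a genuine obstacle; the only points requiring care are selecting the correct normalization of the elliptic identity so that the coefficient matching is exact, and confirming $0<\kappa^2<1$ (guaranteed by the hypothesis $\phi_0>\phi_1>0$) so that the formula indeed describes a genuine dnoidal wave rather than a degenerate limit ($\kappa\to 0$ constant or $\kappa \to 1$ solitary).
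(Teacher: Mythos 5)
Your verification is correct and complete: the coefficient matching ($D^4$, $D^2$, constant term) against the elliptic identity $(dn')^2=(1-dn^2)(dn^2-(1-\kappa^2))$, combined with Vieta's relations for $r^2-4\sigma(1-c^2)r-a_1=0$, reproduces exactly the formulas \eqref{4.2}, and the period statement follows from the $2K(\kappa)$-periodicity of $dn$. The paper itself states Proposition \ref{prop:1} without proof, treating it as a classical fact about the Newton equation \eqref{2.5}, and your argument is precisely the standard computation being invoked; the only cosmetic improvement would be to settle uniqueness at the turning point $\phi(0)=\phi_0$ by appealing to the second-order equation \eqref{2.4} with data $\phi(0)=\phi_0$, $\phi'(0)=0$ (where Picard--Lindel\"of applies directly), rather than to the first-order equation where uniqueness is delicate at zeros of the quartic.
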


   We now turn our attention to the spectral stability of such solutions, in the context of the Zakharov system \eqref{1.1}.
  \subsection{The linearized problem}
   For the purposes of linearization, we rewrite
  the system (\ref{1.1}) as a first order in time system, in the form
  \begin{equation}\label{3.1}
  	\left\{  \begin{array}{ll}
  		v_t=-V_x , \;
  		\\
  		V_t=-(v+\frac{1}{2}|u |^2)_x
  		\\
  		iu_t+u_{xx}=uv.
  	\end{array} \right.
  \end{equation}
  Note that we enforce uniqueness by adding the condition $\int_{-T}^T{V(t,x)}dx=0$. Consider the perturbations in the form
  \begin{eqnarray*}
  	u(t,x) &=&e^{-iwt}e^{i\frac{c}{2}(x-ct)}[\phi(x-ct)+p(t,x-ct)]\\
  	v(t,x) &=& \psi(x-ct)+q(t,x-ct)\\
  	V(t,x)&=& \varphi(x-ct)+h(t,x-ct)
  \end{eqnarray*}
  where $q$ and $r$ are real-valued functions, and $p$ is complex-valued function. Here  $\vp$ may be identified  as the unique mean-value zero function, i.e. $\int_{-T}^T \vp(x) dx=0$, satisfying
  \begin{equation}
  	\label{c:10}
  	 \vp'=c\psi'=\f{1}{c}(\psi+\f{\phi^2}{2})'.
  \end{equation}
 Note that \eqref{c:10} is consistent with the zero order terms in \eqref{3.1} as well as \eqref{2.3}. Accordingly, as $V$ is mean value zero as well, we must require that the perturbation $h$ is mean-value zero as well, $\int_{-T}^T h(t,x) dx=0$.

  Plugging in (\ref{3.1}) and ignoring quadratic and higher order terms, we get the following linear system
  \begin{equation}\label{3.2}
  	\left\{ \begin{array}{ll}
  		q_t=cq_x-h_x\\
  		r_t=cr_x-q_x-(\phi \Re p)_x\\
  		ip_t=-p_{xx}-\left(w+\frac{c^2}{4}\right)p+\psi p+\phi q.
  	\end{array} \right.
  \end{equation}
  Further, by letting  $p=p_1+ip_2$, the system (\ref{3.2}) takes the form
  \begin{equation}\label{3.3}
  	\left\{ \begin{array}{ll}
  		q_t=cq_x-h_x\\
  		\\
  		r_t=ch_x-q_x-(\phi \Re p)_x\\
  		\\
  		p_{1t}=-p_{2xx}-\left(w+\frac{c^2}{4}\right)p_2+\psi p_2\\
  		\\
  		-p_{2t}=-p_{1xx}-\left(w+\frac{c^2}{4}\right)p_1+\psi p_1+\phi q.
  	\end{array} \right.
  \end{equation}
  For $\vec{U}=(p_2,p_1,q,h)$, the above system can be written in the form
  \begin{equation}\label{3.4}
  	\vec{U}_t=\mathcal{J}\mathcal{H}\vec{U},
  \end{equation}
  where
  \begin{eqnarray}
  	\label{3.5}
  	\mathcal{J} &=& \begin{pmatrix} 0&-1&0&0\\ 1&0&0&0\\ 0&0&0& -\partial_x\\ 0&0&-\partial_x&0 \end{pmatrix}, \; \; \mathcal{H}=\begin{pmatrix} \cl_-&0&0 &0\\  0&\cl_-&\phi &0\\ 0&\phi &1&-c\\ 0&0&-c&1 \end{pmatrix} \\
   \label{3.6}
  	\cl_-&=& -\partial_x^2+\sigma +\psi = -\partial_x^2+\sigma  - \f{\phi^2}{2(1-c^2)}.
  \end{eqnarray}
  Clearly $\cj^*=-\cj$, whereas $\ch^*=\ch$, where we associate to the operators $\cj, \ch$ the following  domains on the periodic functions
  \begin{eqnarray*}
  	 D(\cj)&=& (L^2[-T,T])^2\oplus (H^1[-T,T])^2\\
  	D(\ch) &=& (H^2[-T,T])^2\oplus L^2[-T,T]\oplus L^2_0[-T,T].
  \end{eqnarray*}
Note that the last component of the domain of $\ch$ is $L^2_0[-T,T]=\{f\in L^2[-T,T]: \int_{-T}^T f(x) dx=0\}$ per our earlier requirement that the last component $h\in L^2_0[-T,T]$.

  Transforming the time-dependent linearized problem \eqref{3.4} into an eigenvalue problem, through the transformation $\vec{U}\to e^{\la t} \vec{U}$, yields
  \begin{equation}\label{3.41}
  	\mathcal{J}\mathcal{H}\vec{U}=\la \vec{U}
  \end{equation}
  As we are in the periodic context, it is well-known that all essential spectrum is empty, thus reducing the spectrum to pure  point spectrum, that is isolated eigenvalues with finite multplicities.  The standard notion of stability is given next.
  \begin{definition}
  	\label{defi:10} We say that the traveling wave solution described in \eqref{1.2}, \eqref{2.3}, \eqref{2.5}
  	 is spectrally stable, if the eigenvalue problem \eqref{3.41} does not have non-trivial solutions with $\Re\la>0$. That is,
  	 $$
  	 (\la, \vec{U}): \Re \la>0, \vec{U} \neq 0,  \vec{U}\in D(\cj\ch) =  (H^2[-T,T])^2\oplus (H^1[-T,T]\oplus H^1_0[-T,T].
  	 $$
  	  Otherwise, if there are such solutions, we refer to  the familly in \eqref{1.2} as spectrally 
  	  unstable.
  	  
  	  For orbital stability, we refer to the standard formulation, see \eqref{b:8} and \eqref{b:12} below. 
  \end{definition}

  \subsection{Main result}
  The following is the main result of this work. 
  \begin{theorem}\label{thm1}
  Periodic traveling waves of dnoidal type of (\ref{1.1}) are spectraly stable for all natural values of the parameters. 
  
  More specifically, the periodic dnoidal waves constructed in Proposition \ref{prop:1}, with the respective speed,  subject to\footnote{recall that this condition is necessary to guarantee the periodicity of such waves},  \eqref{a:10} are spectrally stable solutions of \eqref{3.1}. 
  \end{theorem}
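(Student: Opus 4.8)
The plan is to read \eqref{3.41} as a Hamiltonian eigenvalue problem $\cj\ch\vec U=\la\vec U$ with $\cj^*=-\cj$, $\ch^*=\ch$, and to run the Hamiltonian--Krein instability index count. Writing $k_r$ for the number of positive real eigenvalues, $k_c$ for the number of quadruplets of genuinely complex eigenvalues, and $k_i^-$ for the number of purely imaginary eigenvalues of negative Krein signature, the index identity takes the form
\begin{equation*}
k_r+2k_c+2k_i^-=n(\ch)-n(D),
\end{equation*}
where $n(\ch)$ is the number of negative eigenvalues of $\ch$ and $D$ is the symmetry (constraint) matrix attached to the generalized kernel of $\cj\ch$. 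Since the left-hand side is a nonnegative integer, once we show $n(\ch)=1$, spectral stability ($k_r=k_c=0$) becomes equivalent to the single assertion $n(D)=1$. The proof thus divides into computing $n(\ch)$, identifying the generalized kernel and assembling $D$, and finally pinning down the inertia of $D$.

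First I would compute $n(\ch)$ from the block form \eqref{3.5}. The component $p_2$ decouples and contributes the Hill operator $\cl_-$ of \eqref{3.6}; since $\phi=\phi_0\,dn(\al x,\ka)>0$ solves \eqref{2.4}, we have $\cl_-\phi=0$ with $\phi$ of one sign, so $\phi$ is the ground state, $\cl_-\ge0$, and $n(\cl_-)=0$, $\ker\cl_-=\mathrm{span}\{\phi\}$. On the coupled block in $(p_1,q,h)$ the lower-right matrix $\left(\begin{smallmatrix}1&-c\\-c&1\end{smallmatrix}\right)$ is positive definite because $1-c^2>0$, so by a Schur-complement reduction the negative count of that block equals that of
\begin{equation*}
\cl_+:=\cl_--\f{\phi^2}{1-c^2}=-\p_x^2+\si-\f{3\phi^2}{2(1-c^2)}.
\end{equation*}
Differentiating \eqref{2.4} gives $\cl_+\phi'=0$, and as $\phi'$ has exactly two zeros per period, Floquet theory for this Lam\'e-type operator yields $n(\cl_+)=1$ and $\ker\cl_+=\mathrm{span}\{\phi'\}$. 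Hence $n(\ch)=1$ and $\dim\ker\ch=2$.

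Next I would record the generalized kernel. A direct substitution shows $\ker\ch=\mathrm{span}\{(\phi,0,0,0),\,(0,\phi',\psi',\vp')\}$, the two vectors being the generators of the phase and translation symmetries (in the coordinates $\vec U=(p_2,p_1,q,h)$), and both lie in $\ker(\cj\ch)$. Their Jordan partners $\Phi_1,\Phi_2$, defined by $\cj\ch\Phi_1=(\phi,0,0,0)$ and $\cj\ch\Phi_2=(0,\phi',\psi',\vp')$, are produced by differentiating the wave profile in the parameters $\om$ and $c$; for example $\p_\om\phi$ solves $\cl_+\p_\om\phi=\phi$, which is consistent because $\phi\perp\ker\cl_+$. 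The constraint matrix is then the $2\times2$ symmetric matrix $D_{ij}=\dpr{\ch\Phi_i}{\Phi_j}$, whose entries are, up to fixed constants, the parameter derivatives of the charge $Q=\int_{-T}^T\phi^2\,dx$ and of the momentum, with the mean-zero normalization on $h$ ensuring $\cj$ is invertible on the relevant subspace.

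The decisive and hardest step is to show $n(D)=1$ (equivalently that $D$ has signature $(1,1)$, so in particular $\det D\neq0$ and the Jordan chains have length exactly two). Substituting $\phi=\phi_0\,dn(\al x,\ka)$ and using \eqref{4.2} together with the periodicity relation \eqref{a:10}, each entry of $D$ reduces to an explicit combination of the complete elliptic integrals $K(\ka),E(\ka)$ and their $\ka$-derivatives, so that $n(D)=1$ becomes a sign/monotonicity statement for such combinations across the full admissible range. This is exactly the quantity that the analysis of Angulo--Brango could control only under a supplementary technical hypothesis; the new input of this paper is to establish its sign unconditionally for all natural values of the parameters. With $n(D)=1$ in hand, the index identity forces $k_r=k_c=k_i^-=0$, so \eqref{3.41} has no eigenvalue with $\Re\la>0$, which is spectral stability in the sense of Definition \ref{defi:10}, proving Theorem \ref{thm1}.
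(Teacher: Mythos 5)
Your overall framework coincides with the paper's: the Lin--Zeng counting formula $k_r+2k_c+2k_i^-=n(\ch)-n_0(\cd)$, the spectral facts $n(\cl_-)=0$, $n(\cl_+)=1$, and the reduction of spectral stability to $n(\cd)=1$. Your Schur-complement computation of $n(\ch)=1$ is correct, and is in fact slightly sharper than the paper's Proposition \ref{prop:17}, which only establishes $n(\ch)\leq 1$ directly and recovers equality afterwards from the index formula. The gap lies in the middle step: the identification of $gKer(\cj\ch)\ominus \ker(\ch)$ and hence of the matrix $\cd$ itself.

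You assume that both kernel elements $\Psi_1=(\phi,0,0,0)$ and $\Psi_2=(0,\phi',\psi',\vp')$ possess Jordan partners $\Phi_1,\Phi_2$ in the admissible space, and you build $D_{ij}=\dpr{\ch\Phi_i}{\Phi_j}$ out of parameter derivatives of the charge and momentum, as in the classical Grillakis--Shatah--Strauss setting. Both parts of this fail here, for the same reason: the constraint that the fourth component be mean-value zero. First, $\Psi_2$ has \emph{no} Jordan partner: the solution of $\cj\ch\vec{\eta}_4=\Psi_2$ has fourth component $\f{1}{2(1-c^2)}\phi^2$, whose integral is strictly positive, so $\vec{\eta}_4\notin D(\cj\ch)$ --- this is exactly the content of Proposition \ref{prop:36}, and it is what makes the Jordan chain over $\Psi_2$ terminate at length one. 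Second, because $\ker(\cj)=span[(0,0,1,0),(0,0,0,1)]$ is nontrivial, $\ker(\cj\ch)$ is strictly larger than $\ker(\ch)$: it contains the combination $\vec{\tilde{\eta}}_1$ of \eqref{c:30}, which must be included in the basis of $gKer(\cj\ch)\ominus\ker(\ch)$ alongside the single surviving Jordan partner (up to sign) $\vec{\tilde{\eta}}_3=(0,0,1,0)$ of $\Psi_1$ (Propositions \ref{prop:27} and \ref{prop:50}). Your $2\times 2$ matrix is therefore assembled on the wrong subspace, and its entries are not the ones entering the correct $\cd$. Finally, even granting the correct $\cd$, the decisive step is never carried out: you state that $n(D)=1$ ``reduces to a sign/monotonicity statement'' for elliptic-integral expressions, but that sign determination --- $\det(\cd)<0$, which the paper obtains by constructing the Green's function of $\cl_+$ explicitly and proving $\dpr{\cl_+^{-1}\phi}{\phi}<0$ together with $2T+\f{c^2}{1-c^2}\dpr{\cl_+^{-1}\phi}{\phi}>0$, see \eqref{DK1} and the estimates following it --- is precisely the new mathematical content that removes the Angulo--Brango condition \eqref{ang:10}. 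Deferring it leaves the theorem unproved.
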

  It is worth to note, that in \cite{AB}, orbital stability of periodic waves of dnoidal type for the system (\ref{1.1} was proved. Angulo and Brango, \cite{AB} have shown  that for the equivalent system \eqref{3.1}, there is orbital stability, if one asks for an additional technical condition, see \eqref{ang:10} below.  More precisely, they proved that for all $\varepsilon>0$, there exists $\delta>0$ such that for any initial data $(v_0,V_0, u_0)\in L^2[-T,T]\times L_0^2[-T,T]\times H^1[-T,T]$ satisfying
\begin{equation}
	\label{b:8} 
	\|v_0-\psi\|_{L^2[-T,T]}<\delta, \; \; \|V_0-\varphi\|_{L^2[-T,T]}<\delta, \; \; 
	\|u_0-  \phi\|_{H^1[-T,T]}<\delta
\end{equation}
then
\begin{equation}
	\label{b:12}
	\begin{cases}
		\inf_{y\in \mathbf{R}}||v(\cdot+y,t)-\psi||_{L^2[-T,T]}<\varepsilon, \; \; \inf_{y\in \mathbf{R}}||V(\cdot+y,t)-\psi||_{L^2[-T,T]}<\varepsilon, \\ 
		\inf_{(\theta,y)\in [0,2\pi)\times\mathbf{R}}||e^{i\theta}u(\cdot+y,t)-\phi||_{H^1[-T,T]}<\varepsilon
	\end{cases}
\end{equation}
if 
\begin{equation}
	\label{ang:10} 
	\int_0^Tv_0(x)dx\leq \int_{0}^{T}\psi(x)dx.
\end{equation}
This result is established by adapting the results in \cite{Be, Bo, We1} to the periodic case.

Our work is structured as follows. In Section \ref{sec:2} below, we provide some basic and preliminary results - about the instability index counting theory and the relation of the linearized operators $\cl_\pm$ to the classical Schr\"odinger operators arising in the elliptic function theory. Next, in Section \ref{sec:3}, we develop the spectral theory for the self-adjoint matrix  linearized operator $\ch$, and the full linearized operator $\cj \ch$. In particular, we describe the kernels and the generalized kernels in full details. We also show that the Morse index of $\ch\leq 1$. This is later upgraded in Section \ref{sec:4} to $n(\ch)=1$. Section \ref{sec:4} also contains the proof of the main result, namely  the spectral  and orbital stability of the dnoidal waves. Spectral stability is achieved via the instability index count, while the orbital stability is obtained as a consequence of an abstract result, Theorem 5.2.11, p. 143, \cite{KP}, which relates the two notions. 

  \section{Preliminaries}
  \label{sec:2} 
  We start with some facts about the instability index counting theory for eigenvalue problems of the type described in \eqref{3.41}.
  \subsection{Instability index counting}
  \label{sec:2.1}
  We will give some  results about the instability index count theories developed in \cite{LZ}.  These allow us to count the number of unstable eigenvalues for eigenvalue problems of the form (\ref{s4.1}) (see below) based on the information about the the spectrum of various self-adjoint operators, both scalar
  and matrix, and some specific estimates.  For eigenvalue problem in the form
  \begin{equation}
  	\label{s4.1}
  	\mathcal{I} \mathcal{L} z=\lambda z.
  \end{equation}
Here, our standing assumption is  that for appropriate Hilbert space $X$, $\cl:X\to X^*$ is bounded and symmetric, and in addition  $\cl=\cl^*$ on appropriately defined Hilbert space $X\subset H \subset X^*$, and domain $D(\cl)$. In addition, assume that $\cl$  also a finite number of negative eigenvalues, $n(\mathcal{L})$, a quantity  referred to as Morse index of the operator $\mathcal{L}$.   In addition, $\mathcal{I}^*=-\mathcal{I}$.

  Let $k_r$  be  the sum of algebraic multiplicities of positive eigenvalues of the spectral problem \eqref{s4.1} (i.e. the number of real instabilities or real modes), $k_c$ be the sum of algebraic multiplicities of quadruplets of eigenvalues with non-zero real and imaginary parts, and $k_i^-$, the number of pairs of purely imaginary eigenvalues with negative Krein-signature.  For a simple pair of imaginary eigenvalues $\pm i \mu, \mu\neq 0$, and the corresponding eigenvector
  $\vec{z} = \left(\begin{array}{c}
  	z_1  \\ z_2
  \end{array}\right) $, the Krein signature, either $\pm 1$ is the following quantity
  $
  sgn(\dpr{ \mathcal{L}  \vec{z}}{ \vec{z}}).
  $

  Also of importance in this theory is a finite dimensional matrix $\cd$, which is obtained from the adjoint eigenvectors for \eqref{s4.1}. More specifically, consider  the generalized kernel of $\mathcal{I}\mathcal{L}$
  $$
  gKer(\mathcal{I}\mathcal{L})=span[(Ker[(\mathcal{I}\mathcal{L})^l], l=1, 2, \ldots]  .
  $$
  Assume that the dimension of the space $gKer(\ci\cl)\ominus Ker(\cl)$ is finite\footnote{ More generally, for subspaces $A\subset B$ of a fixed Banach space , one can always complement $A$ in $B$, if $dim(B\setminus A)<\infty$. That is, $B=A\oplus A_1$. In such a case, we denote $A_1:=B\ominus A$. Note that $A_1$ is not unique, so we denote by $B\setminus A$ any subspace with the property $A\oplus A_1=B$}. 
  Select a basis in 
  $$
  gKer(\ci\cl)\ominus Ker(\cl)=span[\eta_j, j=1, \ldots, N].
  $$
  Then $\cd\in \cm_{N\times N}$ is defined via
  $$
  \cd:=\{\cd_{i j}\}_{i,j=1}^N: \cd_{i j}=\dpr{\cl \eta_i}{\eta_j} .
  $$
  Then, according to  \cite{LZ}, we have the following formula, relating the number of
  ``instabilities'' or Hamiltonian  index of the eigenvalue problem \eqref{s4.1} and the Morse indices of $\cl$ and $\cd$
  \begin{equation}
  	\label{b:20}
  	k_{Ham}:=k_r+2 k_c+2k_i^-=n(\cl)-n_0(\cd),
  \end{equation}
 where $n_0(\cd)=\#\{\la\leq 0: \la\in \si(\cd)\}$ is the number of non-positive eigenvalues of $\cd$.

  {\bf Remark:} 
  As an easy corollary,  if $n(\mathcal{L})=1$, it follows from \eqref{b:20} that
  	$k_c=k_i^-=0$ and
  	\begin{equation}
  		\label{170}
  		k_r=1-n(\mathcal{D}).
  	\end{equation}
  	Thus, in the case $n(\cl)=1$, instability occurs exactly when $n(\cd)=0$, while stability occurs whenever $n(\cd)=1$.

\subsection{The linearized operators $\cl_\pm$ in terms of the standard Hill operators }
\label{sec:2.3}
We start by introducing another classical linearized Schr\"odinger operator, associated with the wave $\phi$, namely
$$
\cl_+:=-\p_x^2 + \si - \f{3}{2(1-c^2)} \phi^2.
$$
We now consider two  concrete classical Hill operators, which are related to the linearized operators $\cl_\p,$ along with some relevant spectral properties. These will allow us to accurately determine the negative spectrum and the kernel of the scalar Schr\"odinger operators, which will be of use in the sequel.

More specifically, the Schr\"odinger operator
$$
\Lambda_1=-\partial_{y}^{2}+6k^{2}sn^{2}(y, k),
$$
with periodic boundary conditions on $[0, 4K(k)]$, has  eigenvalues  that are all
simple. The first few  eigenvalues and corresponding eigenfunctions are given in the list below
$$\begin{array}{ll}
	\nu_{0}=2+2k^2-2\sqrt{1-k^2+k^4}; \ \  \phi_{0}(y)=1-(1+k^2-\sqrt{1-k^{2}
		+k^{4}})sn^{2}(y, k),\\ \\
	\nu_{1}=1+k^{2}, \ \
	\phi_{1}(y)=cn(y, k)dn(y, k)
	=sn'(y, k),\\ \\
	\nu_{2}=1+4k^{2}, \ \
	\phi_{2}(y)=sn(y, k)dn(y, k)
	=-cn'(y, k),\\
	\\
	\nu_{3}=4+k^{2}, \ \
	\phi_{1}(y)=sn(y, k)cn(y, k)
	=-k^{-2}dn'(y, k),\\[1mm]
\end{array}
$$
Similarly, for the operator
$$
\Lambda_2=-\partial_y^2+2\kappa^2sn^2(y,
\kappa),
$$
 with  periodic boundary conditions on $[0, 4K(k)]$, eigenvalues   are all
simple. The first three  eigenvalues and corresponding eigenfunctions are:
$$
\left\{\begin{array}{ll}
	\epsilon_0=k^2, & \theta_0(y)=dn(y, k),\\[1mm]
	\epsilon_1=1, & \theta_1(y)=cn(y, k),\\[1mm]
	\epsilon_2=1+k^2, & \theta_2(y)=sn(y, k).
\end{array}
\right.
$$

We now relate the Schr\"odinger operators $\cl_\pm$ with $\La_1, \La_2$. We start with the dnoidal case.
\subsubsection{The operators $\cl_+, \cl_-$ in terms of $\La_1, \La_2$}

An elementary and classical calculation shows 
\begin{eqnarray}
	\label{4.3}
	\cl_+ &=&\alpha^2[\Lambda_1-(4+\kappa^2)]\\
	\label{4.4}
	\cl_- &=& \alpha^2[\Lambda_2-\kappa^2].
\end{eqnarray}
Based on the formula \eqref{4.1} and \eqref{4.2}, we can formulate the following useful spectral properties.
\begin{proposition}
	\label{prop:5}
	The linearized operators $\cl_\pm$ have the following spectral properties:
	\begin{itemize}
		\item $n(\cl_+)=1$, $\ker(\cl_+)=span[\phi']$
		\item $\cl_-\geq 0$, $\ker(\cl_-)=span[\phi]$.
	\end{itemize}
\end{proposition}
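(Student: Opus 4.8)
The plan is to read off the spectral data of $\cl_\pm$ directly from the explicit correspondences \eqref{4.3} and \eqref{4.4} with the classical Hill operators $\La_1,\La_2$, whose full spectral pictures are recorded above. Since $\cl_+ = \alpha^2[\La_1 - (4+\kappa^2)]$ with $\alpha^2>0$, the operator $\cl_+$ is a positive affine rescaling of $\La_1$; hence $\cl_+$ has eigenvalue $\alpha^2(\nu_j - (4+\kappa^2))$ with the same eigenfunction as the $j$-th eigenfunction of $\La_1$, and the sign pattern of its spectrum is governed entirely by comparing the $\nu_j$ to the shift $4+\kappa^2$. Likewise $\cl_- = \alpha^2[\La_2-\kappa^2]$ inherits the eigenvalues $\alpha^2(\epsilon_j - \kappa^2)$ with the eigenfunctions $\theta_j$ of $\La_2$.

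For $\cl_-$ the claim is immediate. The lowest eigenvalue of $\La_2$ is $\epsilon_0=\kappa^2$ with eigenfunction $\theta_0(y)=dn(y,k)$, so the lowest eigenvalue of $\cl_-$ is $\alpha^2(\kappa^2-\kappa^2)=0$. Since the $\epsilon_j$ are listed as simple and increasing, every other eigenvalue $\alpha^2(\epsilon_j-\kappa^2)$ with $j\ge 1$ is strictly positive. This gives $\cl_-\ge 0$ with a one-dimensional kernel spanned by $dn(\alpha x,\kappa)$. After unwinding the change of variables $y=\alpha x$ in \eqref{4.1}, the kernel eigenfunction is exactly $\phi(x)=\phi_0\,dn(\alpha x,\kappa)$, so $\ker(\cl_-)=\mathrm{span}[\phi]$, as asserted. (One sanity check worth performing: differentiating the profile equation \eqref{2.4} confirms $\cl_-\phi=0$ directly, since $\cl_-$ is precisely $-\p_x^2+\sigma-\frac{\phi^2}{2(1-c^2)}$ and $\phi$ solves \eqref{2.4}.)

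For $\cl_+$ the shift is $4+\kappa^2$, so I would compare each $\nu_j$ with this value. The relevant eigenvalue is $\nu_3=4+\kappa^2$ (matching the shift) with eigenfunction $\phi_3(y)=sn(y,k)cn(y,k)=-\kappa^{-2}dn'(y,k)$; this produces the kernel of $\cl_+$. Under $y=\alpha x$, the function $dn'(\alpha x,\kappa)$ is (up to a constant) $\frac{d}{dx}dn(\alpha x,\kappa)$, i.e. proportional to $\phi'(x)$, so $\ker(\cl_+)=\mathrm{span}[\phi']$; this also agrees with differentiating \eqref{2.4} in $x$, which shows $\cl_+\phi'=0$ for free. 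It remains to count the negative eigenvalues. The eigenvalues of $\La_1$ below $\nu_3=4+\kappa^2$ are $\nu_0,\nu_1,\nu_2$; one must verify $\nu_0<4+\kappa^2$ (so $\nu_0$ contributes a negative eigenvalue of $\cl_+$) while $\nu_1=1+\kappa^2$ and $\nu_2=1+4\kappa^2$ satisfy $\nu_1,\nu_2<4+\kappa^2$ as well, yet must not \emph{all} lie below the shift if $n(\cl_+)=1$ is to hold. This is the crux: the claim $n(\cl_+)=1$ forces exactly one $\nu_j$ to be strictly less than $4+\kappa^2$.

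The main obstacle is therefore the ordering of the discrete spectrum of $\La_1$ relative to the shift $4+\kappa^2$, because naively there appear to be several candidates below it. The resolution must exploit that $\La_1$ has \emph{periodic} boundary conditions on $[0,4K(k)]$, whereas the physical operator $\cl_+$ lives on the half-period interval $[-T,T]$ with $2T=2K(k)/\alpha$; under $y=\alpha x$ this corresponds to the interval $[-K(k),K(k)]$ of length $2K(k)$, exactly \emph{half} the period of $sn^2$. The even/odd parity and the antiperiodic-versus-periodic character of the eigenfunctions $\phi_j$ then select which of the listed eigenvalues actually survive as genuine eigenvalues of $\cl_+$ on the correct interval. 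Concretely, I expect that only the eigenvalues whose eigenfunctions have the correct periodicity on $[-K(k),K(k)]$ are admissible, and that among $\nu_0,\nu_1,\nu_2$ only $\nu_0$ (with the even eigenfunction $\phi_0$) together with $\nu_3$ (kernel, via $\phi'$) are retained, while $\nu_1,\nu_2$ are discarded as eigenfunctions of the wrong symmetry class. Pinning down this parity/periodicity selection is the step that demands the most care, and it is precisely what upgrades the crude count to the sharp statements $n(\cl_+)=1$, $\ker(\cl_+)=\mathrm{span}[\phi']$, and $\cl_-\ge0$, $\ker(\cl_-)=\mathrm{span}[\phi]$.
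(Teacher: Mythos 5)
Your route is the paper's own route: the paper offers no argument for Proposition \ref{prop:5} beyond the identifications \eqref{4.3}--\eqref{4.4} and the tabulated spectra of $\La_1,\La_2$, so reading the spectrum of $\cl_\pm$ off those tables is exactly what is intended, and your final selection of surviving eigenvalues ($\nu_0$ and $\nu_3$ for $\cl_+$; $\epsilon_0$ for $\cl_-$) is the correct one. The step you leave open, however, must be resolved by \emph{periodicity}, not parity, and your phrasing mixes the two. The operators $\cl_\pm$ act on $2T$-periodic functions, which under $y=\alpha x$ become $2K(\kappa)$-periodic functions; note that $2K$ is the \emph{full} period of $sn^2$ and of $dn$ (it is half the period of $sn$ and $cn$), so your parenthetical ``half the period of $sn^2$'' is a slip. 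The admissible eigenfunctions are thus precisely the $2K$-periodic ones: since $sn(y+2K)=-sn(y)$, $cn(y+2K)=-cn(y)$, $dn(y+2K)=dn(y)$, the functions $cn$, $sn$, $cn\,dn$, $sn\,dn$ are all $2K$-antiperiodic and hence excluded, while $dn$, $1-(1+\kappa^2-\sqrt{1-\kappa^2+\kappa^4})sn^2$ and $sn\,cn$ are $2K$-periodic and retained. Had you instead selected by even/odd symmetry, you would wrongly keep $\nu_1$, since its eigenfunction $cn\,dn$ \emph{is} even, and you would conclude $n(\cl_+)=2$; only the periodic/antiperiodic dichotomy yields the stated counts. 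With that settled, everything closes: $\nu_0<\nu_1<\nu_2<\nu_3=4+\kappa^2$ and $\epsilon_0=\kappa^2<\epsilon_1<\epsilon_2$, with all remaining (higher) eigenvalues lying above the respective shifts, so $n(\cl_+)=1$, $\ker(\cl_+)=span[\phi']$, $\cl_-\geq 0$, $\ker(\cl_-)=span[\phi]$.
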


   \section{Spectral theory for $\ch$ and $\cj \ch $}
   \label{sec:3} 
   
We now discuss some elementary spectral properties of the scalar Schr\"odinger  operators $\cl_\pm$.
\subsection{Elementary properties of $\cl_\pm$}
Note that $\cl_-$ and $\cl_+$ have some generic properties, which can be gleaned directly from the defining equation \eqref{2.4}. More precisely, we have the following two relations
\begin{equation}
	\label{a:23}
	\cl_-[\phi]=0; \ \ \cl_+[\phi']=0.
\end{equation}
Indeed, the formula  $	\cl_-[\phi]=0$ is nothing but \eqref{2.4}, while $\cl_+[\phi']=0$ is obtained from \eqref{2.4} by differentiation in the spatial variable. We have thus idenitifed at least one element in each $Ker(\cl_-), Ker(\cl_+)$. Clearly, per the standard Sturm-Lioville's theory for Schr\"odinger operators acting on periodic functions, it is possible that there might be up to one additional elements in each of $Ker(\cl_-), Ker(\cl_+)$.
In our example of the dnoidal waves,   this does not happen and indeed, it turns out that $\ker(\cl_-)=span[\phi], \ker(\cl_+)=span[\phi']$, see Section \ref{sec:2.3} above. 
Interestingly, and based on this information only, we can identify the kernel of the self-adjoint matrix operator $\ch$.

  \subsection{Determination of $Ker(\ch)$}
  Before we start with our analysis, let us recall that the domain of the operator $\ch$ is so that the last component is mean-zero. We have the following result.
  \begin{proposition}
  	\label{prop:19}
 The kernel of $\mathcal{H}$ is two-dimensional. 
  	In fact, $\ker(\ch)=span[\Psi_1, \Psi_2]$, where
  	$$
  	\Psi_1=\begin{pmatrix}  \phi \\ 0\\0 \\ 0 \end{pmatrix}, \; \;
  	\Psi_2=\begin{pmatrix} 0\\ \phi' \\ \frac{1}{c^2-1}\phi \phi' \\ \frac{c}{c^2-1}\phi \phi'
  	\end{pmatrix}.
  	$$
  \end{proposition}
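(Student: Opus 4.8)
The plan is to solve the system $\ch\vec U=0$ directly, component by component, and reduce it to the two scalar kernel equations $\cl_- p_2=0$ and $\cl_+ p_1=0$, whose solution spaces are already recorded in Proposition \ref{prop:5}. Writing $\vec U=(p_2,p_1,q,h)$ and reading off the rows of $\ch$ in \eqref{3.5}, the equation $\ch\vec U=0$ is equivalent to the four scalar relations
\begin{equation}
\cl_- p_2=0,\qquad \cl_- p_1+\phi q=0,\qquad \phi p_1+q-ch=0,\qquad -cq+h=0.
\end{equation}
The first relation is completely decoupled from the rest, and by Proposition \ref{prop:5} it forces $p_2\in\ker(\cl_-)=\mathrm{span}[\phi]$, i.e.\ $p_2=a\phi$; this accounts for the generator $\Psi_1$.

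Next I would treat the last two relations as a linear algebraic system for the pair $(q,h)$, with $p_1$ regarded as a source term:
$$
\begin{pmatrix}1&-c\\-c&1\end{pmatrix}\begin{pmatrix}q\\h\end{pmatrix}=\begin{pmatrix}-\phi p_1\\0\end{pmatrix}.
$$
Since $1-c^2>0$, the coefficient matrix is invertible (its determinant is $1-c^2$), so I can solve uniquely for
$$
q=\f{\phi p_1}{c^2-1},\qquad h=\f{c\,\phi p_1}{c^2-1},
$$
expressing the last two components in terms of $p_1$ alone.

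The crux is the second relation. Substituting the expression for $q$ into $\cl_- p_1+\phi q=0$ gives $\cl_- p_1+\f{\phi^2}{c^2-1}p_1=0$, and here the potentials combine favorably: since $\cl_-=-\p_x^2+\si-\f{\phi^2}{2(1-c^2)}$, one has
$$
\cl_- p_1+\f{\phi^2}{c^2-1}p_1=-p_1''+\si p_1-\f{\phi^2}{2(1-c^2)}p_1-\f{\phi^2}{1-c^2}p_1=-p_1''+\si p_1-\f{3\phi^2}{2(1-c^2)}p_1=\cl_+ p_1.
$$
Thus the second relation is exactly $\cl_+ p_1=0$, and Proposition \ref{prop:5} yields $p_1\in\ker(\cl_+)=\mathrm{span}[\phi']$, i.e.\ $p_1=b\phi'$. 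Feeding $p_1=b\phi'$ back into the formulas for $q,h$ produces precisely the last two components of $\Psi_2$, so every solution has the form $a\Psi_1+b\Psi_2$; since $\Psi_1,\Psi_2$ are manifestly independent (the first has nonzero top slot, the second zero top slot), we get $\ker(\ch)=\mathrm{span}[\Psi_1,\Psi_2]$, which is two-dimensional.

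The one point that needs verification, rather than constituting a real obstacle, is that the candidate elements actually lie in $D(\ch)$, whose last slot is the mean-zero space $L^2_0[-T,T]$. For $\Psi_2$ the last component is $\f{c}{c^2-1}\phi\phi'=\f{c}{2(c^2-1)}(\phi^2)'$, whose integral over $[-T,T]$ vanishes by periodicity of $\phi$, so indeed $h\in L^2_0$; the regularity $p_1,p_2\in H^2$ and $q\in L^2$ is automatic from the smoothness of $\phi$. I expect no serious difficulty in the argument: the only computation one must get right is the algebraic collapse of $\cl_-+\f{\phi^2}{c^2-1}$ into $\cl_+$, which is exactly what couples the two scalar kernels of Proposition \ref{prop:5} and pins down $\Psi_2$.
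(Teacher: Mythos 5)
Your proposal is correct and follows essentially the same route as the paper's own proof: reduce $\ch\vec U=0$ to the four scalar equations, observe that the first decouples to $\cl_- p_2=0$, eliminate $(q,h)$ via the last two equations, and recognize that the second equation collapses to $\cl_+ p_1=0$, after which Proposition \ref{prop:5} pins down both kernels. Your version is slightly more careful than the paper's in two minor respects — you justify the invertibility of the $2\times 2$ block via $1-c^2>0$ and you verify the mean-zero domain condition for the last slot of $\Psi_2$ (which the paper relegates to a remark) — but the argument is the same.
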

{\bf Remark:} Note that since the fourth component of both $\Psi_1, \Psi_2$ are mean-zero functions,  they do belong to the domain of $\ch$, as stated.
  \begin{proof}
  	 Let $\mathcal{H}\vec{f}=0$, where $\vec{f}=(f_1,f_2,f_3,f_4)$. Then, we have the following system
  	\begin{equation}\label{4.4}
  		\left\{ \begin{array}{ll}
  			\cl_-f_1=0\\
  			\cl_- f_2+\phi f_3=0\\
  			\phi f_2+f_3-cf_4=0\\
  			-cf_3+f_4=0.
  		\end{array} \right.
  	\end{equation}
  	
  	Obviously, we have
  	$$\Psi_1=\begin{pmatrix} \phi \\ 0 \\0 \\ 0 \end{pmatrix} \in \ker \mathcal{H}.$$
  	From the last three  equations of the system (\ref{4.4}), we have $f_4=cf_3$ and $\phi f_2=(c^2-1)f_3$. Plugging in the second equation of the system (\ref{4.4}), we get
  	$$
  	\left( \cl_- + \frac{\phi^2}{c^2-1}\right)f_2=0.
  	$$
  	Noting that $\cl_+=\cl_- + \frac{\phi^2}{c^2-1}$, the last equation means that $\cl_+f_2=0$.
  	Hence, and up to a constant, $f_2=\phi'$, $f_3=\frac{1}{c^2-1}\phi \phi'$,  $f_4=\frac{c}{c^2-1}\phi \phi'$, and with this we get
  	$$\Psi_2=\begin{pmatrix} 0\\ \phi' \\ \frac{1}{c^2-1}\phi \phi' \\ \frac{c}{c^2-1}\phi \phi' \end{pmatrix} \in \ker \mathcal{H}.
  	$$
  	Clearly, this describes all the linearly independent elemenst in $\ker(\ch)$, and Proposition \ref{prop:19} is established in full.
  \end{proof}
  Our next task is to identify $gKer(\cj\ch)\ominus \ker(\ch)$, as any basis of this subspace is relevant in our stability calculation, i.e. the matrix $\cd$, see Section \ref{sec:2.1}.
  \subsection{Identifying $\ker(\cj\ch)$}
  We start with the elements in $\ker(\cj\ch)\ominus \ker(\ch)$. That is, we would like to find elements $\vec{\eta}$, so that
  \begin{equation}
  	\label{c:20}
  	 \ch \vec{\eta}\in \ker(\cj)=span[(0,0,1,0), (0,0,0,1)].
  \end{equation}
  Let
  $\mathcal{H}\vec{\eta}_1=(0,0,1,0)$,
  where $\vec{\eta}_1=(\eta_{11}, \eta_{12},\eta_{13},\eta_{14})$. We obtain the following system
  $$\left\{ \begin{array}{ll}
  	\cl_-\eta_{11}=0\\
  	\cl_-\eta_{12}+\phi \eta_{13}=0 \\
  	\phi\eta_{12}+\eta_{13}-c\eta_{14}=1\\
  	-c\eta_{13}+\eta_{14}=0.
  \end{array} \right.
  $$
which, apart from $\ker(\cl_-)$,  has the following solution
  $$
  \vec{\eta}_1=\left( \begin{array}{cc}  0\\-\frac{1}{1-c^2}\cl_+^{-1}\phi \\ \frac{1}{1-c^2}+\frac{1}{(1-c^2)^2}\phi \cl_+^{-1}\phi \\ \frac{c}{1-c^2}+\frac{c}{(1-c^2)^2}\phi \cl_+^{-1}\phi \end{array} \right)
  $$
  Similarly,   let
  $
  \mathcal{H}\vec{\eta}_2=(0,0,0,1),
  $
  where $\vec{\eta}_2=(\eta_{21}, \eta_{22},\eta_{23},\eta_{24})$.  That is, we need to solve the following system
  $$\left\{ \begin{array}{ll}
  	\cl_-\eta_{21}=0\\
  	\cl_-\eta_{22}+\phi \eta_{23}=0 \\
  	\phi\eta_{22}+\eta_{23}-c\eta_{24}=0\\
  	-c\eta_{23}+\eta_{24}=1.
  \end{array} \right.
  $$
  We have the following solution
  $$
 \vec{\eta}_2= \left( \begin{array}{cc}  0\\-\frac{c}{1-c^2}\cl_+^{-1}\phi \\ \frac{c}{1-c^2}+\frac{c}{(1-c^2)^2}\phi \cl_+^{-1}\phi \\ \frac{1}{1-c^2}+\frac{c^2}{(1-c^2)^2}\phi \cl_+^{-1}\phi \end{array} \right)
  $$
  We have found two linearly independent solutions of \eqref{c:20}. 
  However, it is clear that the fourth components of both cannot be mean-value zero. In fact, in the generic case, we need to take a specific linear combination, so that we can achieve mean-value zero in the last component.

  To this end, 
  we take the following  linear combination,
  \begin{equation}
  	\label{c:30}
  	\vec{\tilde{\eta}}_1:= -\vec{\eta}_1 \left(\langle 1,1\rangle+\frac{c^2}{1-c^2}\langle \mathcal{L}_+^{-1}\phi, \phi\rangle\right)+ \vec{\eta}_2 \left(c\langle 1,1\rangle+\frac{c}{1-c^2}\langle \mathcal{L}_+^{-1}\phi, \phi\rangle\right)=
  \end{equation}
  $$=	-\vec{\eta}_1 \left(2T+\frac{c^2}{1-c^2}\langle \mathcal{L}_+^{-1}\phi, \phi\rangle\right)+ \vec{\eta}_2 \left(2Tc+\frac{c}{1-c^2}\langle \mathcal{L}_+^{-1}\phi, \phi\rangle\right). $$
  which clearly has the property $ \int_{-T}^T \vec{\tilde{\eta}}_{1 4} (x) dx =0$ and as a linear combination of
$\vec{\eta}_1, \vec{\eta}_2$ belongs to the set described in \eqref{c:20}, i.e. $	\vec{\tilde{\eta}}_1 \in \ker(\cj\ch)\ominus \ker(\ch)$. 
  We have thus established the following proposition.
  \begin{proposition}
  	\label{prop:27}
  	Under the assumption, $\ker(\cl_-)=span[\phi],\ \  \ker(\cl_+)=span[\phi']$, the kernel \\ $\ker(\cj\ch)$ is three  dimensional. More specifically,
  	$$
  	\ker(\cj\ch)=\ker(\ch)\oplus \left(\ker(\cj\ch)\ominus \ker(\ch) \right)= span[\Psi_1, \Psi_2]\oplus
  	span[\vec{\tilde{\eta}}_1].
  	$$
  \end{proposition}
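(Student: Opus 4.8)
The plan is to use the elementary characterization $\vec\eta\in\ker(\cj\ch)$ if and only if $\ch\vec\eta\in\ker(\cj)$. Reading off \eqref{3.5}, the equation $\cj\vec v=0$ forces the first two entries of $\vec v$ to vanish and the last two to be spatially constant, whence $\ker(\cj)=span[(0,0,1,0),(0,0,0,1)]$. Thus the task reduces to describing all $\vec\eta\in D(\ch)$ with $\ch\vec\eta=(0,0,a,b)$ for constants $a,b$, subject to the standing domain requirement that the last component of $\vec\eta$ be mean-zero. The homogeneous case $a=b=0$ is exactly $\ker(\ch)=span[\Psi_1,\Psi_2]$ from Proposition \ref{prop:19}.

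For the inhomogeneous part I would first record that both $(0,0,1,0)$ and $(0,0,0,1)$ are orthogonal to $\ker(\ch)$: the only pairings that are not visibly zero reduce to $\int_{-T}^T\phi\phi'\,dx=\tfrac12\int_{-T}^T(\phi^2)'\,dx=0$. Since $\ch$ is self-adjoint and $0$ is isolated in its spectrum (the essential spectrum of $\ch$ sits at $1\pm c>0$, away from the origin), its range is closed and equals $\ker(\ch)^{\perp}$; hence both vectors lie in the range and the particular solutions $\vec\eta_1,\vec\eta_2$ constructed above exist, unique modulo $\ker(\ch)$. Because $\ch\vec\eta_1$ and $\ch\vec\eta_2$ are nonzero and independent, $\vec\eta_1,\vec\eta_2$ are independent modulo $\ker(\ch)$. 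Consequently, \emph{before} imposing the domain constraint, the full solution set of $\ch\vec\eta\in\ker(\cj)$ is the four-dimensional space $span[\Psi_1,\Psi_2,\vec\eta_1,\vec\eta_2]$.

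The decisive step is to intersect this four-dimensional space with $D(\ch)$, i.e. to require that the mean of the last component vanish. I would treat this as a single linear functional $L(\vec\eta)=\int_{-T}^T\vec\eta_4\,dx$ and note that it annihilates both basis vectors of $\ker(\ch)$: $\Psi_1$ has last component $0$, while the last component of $\Psi_2$ is $\tfrac{c}{c^2-1}\phi\phi'$, again mean-zero by the same identity. Hence $L$ constrains only the $span[\vec\eta_1,\vec\eta_2]$ part, and does so nontrivially provided $\big(\int_{-T}^T\eta_{14}\,dx,\int_{-T}^T\eta_{24}\,dx\big)\neq(0,0)$. Evaluating gives $\int_{-T}^T\eta_{14}\,dx=\tfrac{c}{1-c^2}\Big(2T+\tfrac{1}{1-c^2}\dpr{\cl_+^{-1}\phi}{\phi}\Big)$ and $\int_{-T}^T\eta_{24}\,dx=\tfrac{1}{1-c^2}\Big(2T+\tfrac{c^2}{1-c^2}\dpr{\cl_+^{-1}\phi}{\phi}\Big)$; these cannot vanish simultaneously, since their joint vanishing would force $\dpr{\cl_+^{-1}\phi}{\phi}=0$ and then $2T=0$. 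Therefore $L$ is a nonzero functional on the four-dimensional space, its kernel (which is precisely $\ker(\cj\ch)$) is exactly three-dimensional, and the surviving direction inside $span[\vec\eta_1,\vec\eta_2]$ is the combination $\vec{\tilde\eta}_1$ of \eqref{c:30}, which was engineered so that $L(\vec{\tilde\eta}_1)=0$. This yields $\ker(\cj\ch)=span[\Psi_1,\Psi_2]\oplus span[\vec{\tilde\eta}_1]$.

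The only genuine obstacle is the non-degeneracy verification in the last step: one must be sure the mean-zero requirement is an honest (nonzero) condition on the pair $(a,b)$, because otherwise all of $span[\vec\eta_1,\vec\eta_2]$ would be admissible and the kernel would have dimension four rather than three. Everything else — the reduction to $\ker(\cj)$, the orthogonality/solvability check, and the mean-zero property of $\Psi_1,\Psi_2$ — is routine bookkeeping on the explicit solutions already in hand.
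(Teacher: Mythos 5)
Your proof is correct and follows essentially the same route as the paper: reduce to $\ch\vec{\eta}\in\ker(\cj)$, use the explicit solutions $\vec{\eta}_1,\vec{\eta}_2$, and cut the resulting four-dimensional solution space down by the mean-zero domain constraint to obtain $span[\Psi_1,\Psi_2,\vec{\tilde{\eta}}_1]$. If anything, you are more careful than the paper, which only remarks that ``in the generic case'' a linear combination is needed: your check that the means of $\eta_{14}$ and $\eta_{24}$ cannot vanish simultaneously is precisely what pins the kernel dimension at three rather than four, a non-degeneracy point the paper leaves implicit.
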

 \noindent  Our next task is to describe the generalized kernel of the linearized oprator.
  \subsection{Structure of $gKer(\cj\ch)$}
  Since we have determined $\ker(\cj\ch)$ in Proposition \ref{prop:27}, it remains to find the generalized eigenvectors associated with this system.

  As a first step, we show that a linear combination of $\vec{\eta}_1, \vec{\eta}_2$ does not give rise to any adjoint eigenvectors.
  \begin{proposition}
  	\label{prop:34}
  	Assume that $\dpr{\cl_{+}^{-1} \phi}{\phi}\neq 0$. Then, the equation
  	\begin{equation}
  		\label{l:10}
  			\cj \ch \vec{f}=\gamma_1 \vec{\eta}_1+\gamma_2 \vec{\eta}_2.
  	\end{equation}
  	does not have any solutions, unless $\gamma_1=\gamma_2=0$.
  \end{proposition}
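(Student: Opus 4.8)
The plan is to rephrase the solvability of \eqref{l:10} as a short list of scalar Fredholm conditions and to show that the hypothesis $\dpr{\cl_+^{-1}\phi}{\phi}\neq 0$ forces these conditions to collapse to $\gamma_1=\gamma_2=0$. Writing $\vec w:=\gamma_1\vec{\eta}_1+\gamma_2\vec{\eta}_2$ and setting $\vec g:=\ch\vec f$, equation \eqref{l:10} reads $\cj\vec g=\vec w$. Spelling this out row by row, using the forms of $\cj,\ch$ in \eqref{3.5}, it becomes
\begin{equation*}
\cl_- f_2+\phi f_3=0,\qquad \cl_- f_1=w_2,
\end{equation*}
\begin{equation*}
-\partial_x(-cf_3+f_4)=w_3,\qquad -\partial_x(\phi f_2+f_3-cf_4)=w_4 .
\end{equation*}
Since the first components of $\vec{\eta}_1,\vec{\eta}_2$ vanish we have $w_1=0$, so the first equation is homogeneous and imposes no obstruction; I would extract the necessary conditions from the remaining three.

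The decisive condition comes from $\cl_- f_1=w_2$. By Proposition \ref{prop:5}, $\cl_-\ge 0$ with $\ker(\cl_-)=\mathrm{span}[\phi]$, so $\cl_-$ is self-adjoint with range $\{\phi\}^{\perp}$, and this equation is solvable for $f_1$ if and only if $\dpr{w_2}{\phi}=0$. Using the explicit second components $\eta_{12}=-\tfrac{1}{1-c^2}\cl_+^{-1}\phi$ and $\eta_{22}=-\tfrac{c}{1-c^2}\cl_+^{-1}\phi$ gives $w_2=-\tfrac{1}{1-c^2}(\gamma_1+c\gamma_2)\,\cl_+^{-1}\phi$, whence $\dpr{w_2}{\phi}=-\tfrac{1}{1-c^2}(\gamma_1+c\gamma_2)\dpr{\cl_+^{-1}\phi}{\phi}$. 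This is exactly where the hypothesis enters: since $\dpr{\cl_+^{-1}\phi}{\phi}\neq 0$ and $1-c^2>0$, I obtain the relation $\gamma_1+c\gamma_2=0$. The last two equations are first order, hence solvable in the periodic class only if their right-hand sides have zero mean over a period, giving $\int_{-T}^T w_3\,dx=0$ and $\int_{-T}^T w_4\,dx=0$.

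To finish, I would substitute $\gamma_1=-c\gamma_2$ into the mean-zero condition on $w_4$. A direct computation with the fourth components $\eta_{14},\eta_{24}$ shows that the two $\dpr{\cl_+^{-1}\phi}{\phi}$-terms cancel and the integral collapses to $\int_{-T}^T w_4\,dx=2T\gamma_2$; since $T>0$ this forces $\gamma_2=0$, and then $\gamma_1=-c\gamma_2=0$, which is the assertion. I would also note that the companion condition $\int_{-T}^T w_3\,dx=0$ reduces to a scalar multiple of $(\gamma_1+c\gamma_2)$ and therefore carries no new information once the relation above is known; it is the $w_4$-condition, together with the cancellation of the $\dpr{\cl_+^{-1}\phi}{\phi}$-terms, that supplies the missing second equation.

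The individual computations are routine, so the only real content is organizational: one must recognize that the two genuinely useful constraints are the one coming from the self-adjoint Fredholm alternative for $\cl_-$ and the mean-zero requirement on $w_4$, and that these two are independent precisely when $\dpr{\cl_+^{-1}\phi}{\phi}\neq 0$. That independence is the crux of the argument — if this pairing vanished, the first constraint would degenerate and one free parameter would survive, which is exactly why the hypothesis is built into the statement.
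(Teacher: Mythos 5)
Your proof is correct and follows essentially the same route as the paper: your Fredholm solvability condition $\dpr{w_2}{\phi}=0$ for the row $\cl_- f_1 = w_2$ is exactly the paper's pairing of \eqref{l:10} against $(0,\phi,0,0)^T$, and your mean-zero requirement on $w_4$ is the paper's observation that the fourth component of $\cj\ch\vec f$ is an exact derivative (the paper reaches the constant $\gamma_2$ via the identity $\vec\eta_2 = c\vec\eta_1 + (0,0,0,1)^T$, while you reach it by direct cancellation in $\gamma_1\eta_{14}+\gamma_2\eta_{24}$ after substituting $\gamma_1=-c\gamma_2$; these are the same computation). The two constraints you isolate, and the role of the hypothesis $\dpr{\cl_+^{-1}\phi}{\phi}\neq 0$, match the paper's argument precisely.
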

  \begin{proof}
  	Note that
   $$
   \gamma_1\vec{\eta}_1+\gamma_2\vec{\eta}_2=(\gamma_1+c\gamma_2)\vec{\eta}_1+\gamma_2\left( \begin{array}{c} 0\\ 0\\0\\1\end{array} \right).
   $$
  Assuming that \eqref{l:10} has solution $f$, we test it first against the vector $\vec{g}:=\left(\begin{array}{c}
  		0 \\ \phi \\ 0 \\ 0
  	\end{array}\right)$. We obtain
  $$
  -\f{\gamma_1+c\gamma_2}{1-c^2}\dpr{\cl_+^{-1} \phi}{\phi}=\dpr{\cj \ch \vec{f}}{\vec{g}}=
  \dpr{\ch \vec{f}}{\cj^*\vec{g}}=\dpr{\cl_- f_1}{\phi}=\dpr{f_1}{\cl_-\phi}=0.
  $$
  This yields that $\gamma_1+c\gamma_2=0$. But then, \eqref{l:10} turns into
  $$
  \cj \ch \vec{f}=\ga_2 \left( \begin{array}{c} 0\\ 0\\0\\1\end{array} \right).
  $$
As the last component of the left hand side is an exact derivative,  this  clearly does not have any solutions, unless $\ga_2=0$.
  \end{proof}
  Next, we show that the equations $\cj\ch f= \Psi_1$, $\cj\ch f= \Psi_2$ do have solutions, and we describe them in details.
To this end, let
  $\mathcal{J}\mathcal{H}\vec{\eta}_3=\Psi_1,$
  where $\vec{\eta}_3=(\eta_{31}, \eta_{32},\eta_{33},\eta_{34})$. We, have the following system
  $$\left\{ \begin{array}{ll}
  	\cl_-\eta_{31}=0\\
  	\cl_-\eta_{32}+\phi \eta_{33}=-\phi \\
  	-(-c\eta_{33}+\eta_{34})_x=0\\
  	-(\phi\eta_{32}+\eta_{33}-c\eta_{34})_x=0.
  \end{array} \right.
  $$
  Note that the first equation solves to $\eta_{3 1}=const. \phi$. But this  will not contribute anything to $gKer(\cj\ch)$ as this solution is already accounted for in $\ker(\ch)$.
  Thus, we take $\eta_{31}=0$. 
  
  Integrating the last two equations yields,    new integration constants.
  $$\left\{ \begin{array}{ll}
  	\cl_-\eta_{32}+\phi \eta_{33}=-\phi \\
  	c\eta_{33}-\eta_{34}=d_1\\
  	-\phi\eta_{32}-\eta_{33}+c\eta_{34}=d_2,
  \end{array} \right.
  $$
  From these three equations, we determine
  $$
  \vec{\eta}_3=\left(\begin{array}{c}
  0 \\ -k\cl_+^{-1}\phi \\ \f{k}{1-c^2} \phi \cl_+^{-1} \phi +k_1\\ \f{c k}{1-c^2} \phi \cl_+^{-1} \phi+k_2
  \end{array}\right)
  $$
  where the particular form of the constants $k, k_1, k_2$ depends on $c, d_1, d_2$, but  is otherwise  unimportant in our analysis. Note that
  $$
   \vec{\eta}_3=k(1-c^2) \vec{\eta}_1 + \left(\begin{array}{c}
   	0 \\ 0  \\ \tilde{k}_1\\ \tilde{k}_2
   \end{array}\right) = k(1-c^2) \vec{\eta}_1+\tilde{k}_2 \vec{\tilde{\eta}}_2+ \tilde{k}_1 \left(\begin{array}{c}
   0 \\ 0  \\ 1\\ 0
\end{array}\right).
   $$
  where again $\tilde{k}_1, \tilde{k}_2$ are two constants. As $\vec{\eta}_1, \vec{\tilde{\eta}}_2\in \ker(\cj\ch)$, we can clearly take
  $\vec{\tilde{\eta}}_3:=\left(\begin{array}{c}
  		0 \\ 0  \\ 1\\ 0
  	\end{array}\right)$. In fact, a direct verification confirms that
  $\cj\ch \vec{\tilde{\eta}}_3=-\Psi_1$. Note that as $\vec{\tilde{\eta}}_{3 4}=0$, it is mean-value zero in the last component. So it is acceptable vector in our analysis.

Lastly, we consider the equation $\cj\ch \eta_4=\Psi_2$. We have the system
  $$\left\{ \begin{array}{ll}
  	\cl_-\eta_{41}=\phi'\\
  	\cl_-\eta_{32}+\phi \eta_{33}=0\\
  	-(\phi \eta_{4 2}+\eta_{43}-c \eta_{44})_x=-\f{c}{2(1-c^2)} (\phi^2)_x\\
  	 -(-c\eta_{4 3}+ \eta_{4 4})_x=-\f{1}{2(1-c^2)}(\phi^2)_x
  \end{array} \right.
  $$
  Note that due to the presence of the vectors $\vec{\tilde{\eta}}_2, \vec{\tilde{\eta}}_3$ in our system, we can integrate the last two equations with constants of integration zero  - if non-zero, their contribution can be written in terms of $span[\vec{\tilde{\eta}}_2, \vec{\tilde{\eta}}_3]$ and hence safely ignored.  Thus, we find the solution
  $$
  \vec{\eta}_4 = \left(\begin{array}{c}
  	\cl_-^{-1} [\phi']\\  -\f{c}{(1-c^2)^2} \cl_+^{-1} [\phi^3] \\ \f{c}{(1-c^2)^2}\phi^2 +\f{c}{(1-c^2)^3} \phi \cl_+^{-1} [\phi^3] \\ \f{c^2+1}{2(1-c^2)^2} \phi^2 + \f{c^2}{(1-c^2)^3} \phi \cl_+^{-1} [\phi^3]
  \end{array}\right)
  $$
  Using that $\cl_+\phi=--\frac{1}{1-c^2}\phi^3$, we get $\cl_+^{-1}\phi^3=-(1-c^2)\phi$. This leads to the following representation
  $$\vec{\eta}_4 = \left(\begin{array}{c}
  	\cl_-^{-1} [\phi']\\  \f{c}{1-c^2} \phi \\ 0 \\ \f{1}{2(1-c^2)} \phi^2.
  \end{array}\right).
  $$
  Here, it becomes clear that $\vec{\eta}_{4 4}$ does not have mean-value zero, hence the solution $\vec{\eta}_4$ does not belong to the required $D(\cj \ch)$.
 We formulate our result in the following proposition.
 \begin{proposition}
 	\label{prop:36}
 	Assume that $\dpr{\cl_+^{-1} \phi}{\phi}\neq 0$. Then there is only one  linearly independent first generation adjoint eigenvector, namely  $\vec{\tilde{\eta}}_3$. That is
 	$$
 	\ker((\cj\ch)^2)\ominus \ker(\cj\ch) = span[\vec{\tilde{\eta}}_3].
 	$$
 \end{proposition}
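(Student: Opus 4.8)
The plan is to determine every first generation adjoint eigenvector, that is, every $\vec{f}\in D(\cj\ch)$ with $\cj\ch\vec{f}\in\ker(\cj\ch)$ but $\vec{f}\notin\ker(\cj\ch)$, and to show that modulo $\ker(\cj\ch)$ these reduce to a single vector $\vec{\tilde{\eta}}_3$. By Proposition \ref{prop:27} every such $\vec{f}$ satisfies
$$\cj\ch\vec{f}=a_1\Psi_1+a_2\Psi_2+a_3\vec{\tilde{\eta}}_1$$
for scalars $a_1,a_2,a_3$, so the real task is to decide which right hand sides are attainable by an \emph{admissible} $\vec{f}$, namely one whose last component has mean value zero. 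The argument proceeds by eliminating the $\vec{\tilde{\eta}}_1$ and $\Psi_2$ directions in turn, each by a different mechanism.

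First I would rule out the $\vec{\tilde{\eta}}_1$ direction exactly as in the proof of Proposition \ref{prop:34}, by pairing against $\vec{g}=(0,\phi,0,0)$. The left hand side gives $\dpr{\ch\vec{f}}{\cj^*\vec{g}}=\dpr{\cl_-f_1}{\phi}=\dpr{f_1}{\cl_-\phi}=0$ since $\cl_-\phi=0$. On the right, $\dpr{\Psi_1}{\vec{g}}=0$ and $\dpr{\Psi_2}{\vec{g}}=\dpr{\phi'}{\phi}=\tfrac12\int_{-T}^T(\phi^2)'\,dx=0$, while a short computation of the second component of $\vec{\tilde{\eta}}_1$ from \eqref{c:30} shows it equals $2T\,\cl_+^{-1}\phi$, so that $\dpr{\vec{\tilde{\eta}}_1}{\vec{g}}=2T\dpr{\cl_+^{-1}\phi}{\phi}$. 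Thus $2T\,a_3\dpr{\cl_+^{-1}\phi}{\phi}=0$, and the standing hypothesis $\dpr{\cl_+^{-1}\phi}{\phi}\neq 0$ forces $a_3=0$.

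Next I would eliminate the $\Psi_2$ direction using the admissibility constraint built into $D(\cj\ch)$. The computations preceding the statement produced $\vec{\tilde{\eta}}_3$ with $\cj\ch\vec{\tilde{\eta}}_3=-\Psi_1$ and $\vec{\eta}_4$ with $\cj\ch\vec{\eta}_4=\Psi_2$, so a particular solution of $\cj\ch\vec{f}=a_1\Psi_1+a_2\Psi_2$ is $-a_1\vec{\tilde{\eta}}_3+a_2\vec{\eta}_4$, and any solution differs from this by an element of $\ker(\cj\ch)$. Every element of $\ker(\cj\ch)=span[\Psi_1,\Psi_2,\vec{\tilde{\eta}}_1]$ has a last component of mean value zero (for $\Psi_2$ because $\phi\phi'=\tfrac12(\phi^2)'$ is an exact derivative, and for $\vec{\tilde{\eta}}_1$ by its very construction), and $\vec{\tilde{\eta}}_3$ has vanishing last component, whereas the last component of $\vec{\eta}_4$ is $\tfrac{1}{2(1-c^2)}\phi^2$. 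Hence the mean of the last component of any solution equals $a_2\int_{-T}^T\tfrac{1}{2(1-c^2)}\phi^2\,dx$, which is strictly positive unless $a_2=0$; admissibility therefore forces $a_2=0$.

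With $a_2=a_3=0$ the equation reduces to $\cj\ch\vec{f}=a_1\Psi_1$, whose admissible solutions are $\vec{f}=-a_1\vec{\tilde{\eta}}_3$ plus an element of $\ker(\cj\ch)$, so modulo $\ker(\cj\ch)$ we obtain exactly $span[\vec{\tilde{\eta}}_3]$, proving the proposition. The hard part is the interplay of the two obstructions: the pairing against $\vec{g}$ detects and annihilates the genuinely non-attainable $\vec{\tilde{\eta}}_1$ component through the spectral quantity $\dpr{\cl_+^{-1}\phi}{\phi}$, whereas $\Psi_2$ is removed not by any spectral obstruction but purely by the mean-value-zero restriction of $D(\cj\ch)$. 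The delicate point one must verify is that correcting a candidate solution by kernel vectors can never repair the mean of the last component, which is precisely why $\int_{-T}^T\phi^2\,dx\neq 0$ is the operative fact there.
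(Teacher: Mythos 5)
You follow the same route as the paper's own proof: decompose $\cj\ch\vec{f}$ along the kernel basis of Proposition \ref{prop:27}, remove the $\vec{\tilde{\eta}}_1$ direction by pairing against $\vec{g}=(0,\phi,0,0)$, and remove the $\Psi_2$ direction via the mean-value restriction on the last component. The first elimination is correct (your identity $(\vec{\tilde{\eta}}_1)_2=2T\,\cl_+^{-1}\phi$ checks out, and it is a clean repackaging of Proposition \ref{prop:34}). The gap is in the second elimination, specifically in the sentence ``any solution differs from this by an element of $\ker(\cj\ch)=span[\Psi_1,\Psi_2,\vec{\tilde{\eta}}_1]$.'' When $a_2\neq 0$, the particular solution $-a_1\vec{\tilde{\eta}}_3+a_2\vec{\eta}_4$ is itself \emph{not} admissible, so its difference with an admissible solution $\vec{f}$ has no reason to be admissible either; all one knows is that this difference solves the homogeneous equation, and the full null space of $\cj\ch$ \emph{without} the mean-zero restriction is the four-dimensional space $span[\Psi_1,\Psi_2,\vec{\eta}_1,\vec{\eta}_2]$. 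Your computation of the mean of the last component omits the contributions of $\vec{\eta}_1,\vec{\eta}_2$, and these are exactly the terms that can repair the mean.

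Concretely, $\cj\ch\vec{\eta}_2=0$ (since $\ch\vec{\eta}_2=(0,0,0,1)$ lies in $\ker(\cj)$), while the last component of $\vec{\eta}_2$ has integral $m_2:=\f{2T}{1-c^2}+\f{c^2}{(1-c^2)^2}\dpr{\cl_+^{-1}\phi}{\phi}$ over $[-T,T]$, which is nonzero: the paper itself proves in Section \ref{sec:4} that $2T+\f{c^2}{1-c^2}\dpr{\cl_+^{-1}\phi}{\phi}>0$ for the dnoidal waves. Hence, with $M_4:=\f{1}{2(1-c^2)}\int_{-T}^{T}\phi^2\,dx>0$, the vector
$$
\vec{f}:=\vec{\eta}_4-\f{M_4}{m_2}\,\vec{\eta}_2
$$
has last component with zero integral, so $\vec{f}\in D(\cj\ch)$, and it satisfies $\cj\ch\vec{f}=\Psi_2$, because the constant added in the fourth slot is annihilated by $\p_x$. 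Thus admissibility does \emph{not} force $a_2=0$: $\vec{f}$ is an admissible first-generation adjoint eigenvector which is independent of $\vec{\tilde{\eta}}_3$ modulo $\ker(\cj\ch)$ (applying $\cj\ch$ to any element of $span[\vec{\tilde{\eta}}_3]\oplus\ker(\cj\ch)$ produces a multiple of $\Psi_1$, never $\Psi_2$). The ``delicate point'' you flag at the end --- that kernel corrections can never repair the mean --- is precisely what fails, because you verified it only against the restricted kernel rather than the full null space. You should be aware that the paper's own proof suffers from the same defect: it discards $\vec{\eta}_4$ solely because $\vec{\eta}_4$ itself violates the mean-zero condition, without excluding corrected combinations such as $\vec{f}$ above. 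So your write-up reproduces rather than introduces the gap; but as written, the step ``admissibility forces $a_2=0$'' is unjustified, and the claimed equality $\ker((\cj\ch)^2)\ominus\ker(\cj\ch)=span[\vec{\tilde{\eta}}_3]$ does not follow --- indeed, the displayed vector $\vec{f}$ indicates that, with the paper's stated domain conventions, this space is actually two-dimensional.
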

\begin{proof}
	Our strategy here is as follows - we will show that
	$$
	\ker((\cj\ch)^2)\ominus \ker(\cj\ch) = span[\vec{\tilde{\eta}}_3, \vec{\eta}_4],
	$$
	if we do not restrict with the condition that $\int_{-T}^T \vec{\eta}_{4 4}(x) dx=0$. Then, due to this restriction, $\vec{\eta}_4$ is excluded, whence our claim follows.

	We have  so far  proved that\footnote{ when considered unrestricted vectors $\vec{\eta}$, that is not-necessarily $\int_{-T}^T \vec{\eta}_{ 4 4 }dx=0$},  $\ker((\cj\ch)^2)\ominus \ker(\cj\ch) \supseteq span[\vec{\tilde{\eta}}_3, \vec{\eta}_4]$. In order to show the other direction, set the equation $\cj\ch f\in \ker(\cj\ch)$ or
	$$
	\cj\ch f= \la_1 \Psi_1+\la_2 \Psi_2+\la_3 \eta_1+\la_4 \eta_2.
	$$
	As $\cj\ch\vec{\tilde{\eta}}_3=-\Psi_1, \cj\ch \vec{\eta}_4=\Psi_2$, we have
	$$
	\cj\ch [f+\la_1  \vec{\tilde{\eta}}_3-\la_2  \vec{\eta}_4]=\la_3 \eta_1+\la_4 \eta_2.
	$$
	We now apply Proposition \ref{prop:34}, to conclude that $\la_3=\la_4=0$. It follows that
	$f-\la_1  \vec{\tilde{\eta}}_3 -\la_2  \vec{\eta}_4\in\ker 	\cj\ch$, which is the claim.
\end{proof}
Finally, we consider the possibility of second adjoint eigenvectors.
Expectedly, it turns out that there are not any.
\begin{proposition}
	\label{prop:42}
	We have,
	$$
	\ker((\cj\ch)^3)\ominus \ker((\cj\ch)^2)= \{0\}.
	$$
\end{proposition}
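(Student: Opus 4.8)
The plan is to take an arbitrary $\vec{f}\in D(\cj\ch)$ with $(\cj\ch)^3\vec{f}=0$ and show that in fact $(\cj\ch)^2\vec{f}=0$; since $\ker((\cj\ch)^2)\subseteq\ker((\cj\ch)^3)$ always, this yields $\ker((\cj\ch)^3)=\ker((\cj\ch)^2)$ and hence the stated triviality of the complement. Because $(\cj\ch)^3\vec{f}=(\cj\ch)^2(\cj\ch\vec{f})=0$, the vector $\cj\ch\vec{f}$ lies in $\ker((\cj\ch)^2)$, which by Propositions \ref{prop:27} and \ref{prop:36} equals $\ker(\cj\ch)\oplus\mathrm{span}[\vec{\tilde{\eta}}_3]=\mathrm{span}[\Psi_1,\Psi_2,\vec{\tilde{\eta}}_1,\vec{\tilde{\eta}}_3]$. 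I would therefore write
$$
\cj\ch\vec{f}=\kappa_1\Psi_1+\kappa_2\Psi_2+\kappa_3\vec{\tilde{\eta}}_1+\beta\vec{\tilde{\eta}}_3,
$$
and observe that the entire proof reduces to showing $\kappa_3=\beta=0$. Indeed $\Psi_1,\Psi_2\in\ker(\cj\ch)$, so once the $\vec{\tilde{\eta}}_1$- and $\vec{\tilde{\eta}}_3$-components are removed we obtain $\cj\ch\vec{f}\in\ker(\cj\ch)$, i.e. $(\cj\ch)^2\vec{f}=0$, which is exactly the claim. The two surviving directions $\vec{\tilde{\eta}}_1,\vec{\tilde{\eta}}_3$ are the only ones that could produce a genuine second-generation vector, so they are what must be killed.

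To eliminate $\kappa_3$, I would pair the displayed identity with $\vec{g}=(0,\phi,0,0)^{T}$, exactly as in the proof of Proposition \ref{prop:34}. Using $\cj^{*}=-\cj$, $\ch^{*}=\ch$ and $\cl_-\phi=0$, the left-hand side collapses:
$$
\dpr{\cj\ch\vec{f}}{\vec{g}}=\dpr{\ch\vec{f}}{-\cj\vec{g}}=\dpr{\cl_-f_1}{\phi}=\dpr{f_1}{\cl_-\phi}=0.
$$
On the right-hand side the $\Psi_1$ and $\vec{\tilde{\eta}}_3$ terms pair to zero since their second components vanish, the $\Psi_2$ term gives $\kappa_2\dpr{\phi'}{\phi}=0$, and only $\vec{\tilde{\eta}}_1$ survives. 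Recalling that the combination in \eqref{c:30} was arranged so that $\gamma_1+c\gamma_2=-2T(1-c^2)$, the second component of $\vec{\tilde{\eta}}_1$ is $2T\,\cl_+^{-1}\phi$, so this term contributes $2T\kappa_3\dpr{\cl_+^{-1}\phi}{\phi}$. Hence $2T\kappa_3\dpr{\cl_+^{-1}\phi}{\phi}=0$, and since $\dpr{\cl_+^{-1}\phi}{\phi}\neq0$ under the standing hypothesis, we conclude $\kappa_3=0$.

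With $\kappa_3=0$ in hand, $\beta$ is removed by a mean-zero argument exploiting the structure of the bottom rows of $\cj\ch$. The third component of $\cj\ch\vec{f}$ equals $-\partial_x(-cf_3+f_4)$, an exact derivative of a $2T$-periodic function, so its integral over $[-T,T]$ vanishes. The third component of the right-hand side is now $\kappa_1\cdot 0+\kappa_2\tfrac{\phi\phi'}{c^2-1}+\beta\cdot 1$, and since $\phi\phi'=\tfrac12(\phi^2)'$ also integrates to zero, integrating the identity over $[-T,T]$ leaves $2T\beta=0$, whence $\beta=0$. Consequently $\cj\ch\vec{f}=\kappa_1\Psi_1+\kappa_2\Psi_2\in\ker(\cj\ch)$, so $(\cj\ch)^2\vec{f}=0$ and $\vec{f}\in\ker((\cj\ch)^2)$, as required.

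The only genuinely delicate step is the elimination of $\kappa_3$: one must verify that pairing against $\vec{g}$ annihilates every term except the $\vec{\tilde{\eta}}_1$-component and returns a nonzero multiple of $\dpr{\cl_+^{-1}\phi}{\phi}$. This is precisely the computation already performed in Proposition \ref{prop:34}, so no new mechanism is needed; the subsequent $\beta=0$ step is a routine consequence of the fact that the last two coordinates of $\cj\ch\vec{f}$ are exact $x$-derivatives and hence have zero mean on $[-T,T]$.
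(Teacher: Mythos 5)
Your proof is correct, and its decisive step is the same as the paper's. The paper poses $\cj\ch\vec{f}=\mu\vec{\tilde{\eta}}_3$ and pairs with $\vec{\tilde{\eta}}_3$, computing $\mu\,\dpr{\vec{\tilde{\eta}}_3}{\vec{\tilde{\eta}}_3}=\dpr{\cj\ch\vec{f}}{\vec{\tilde{\eta}}_3}=-\dpr{\ch\vec{f}}{\cj\vec{\tilde{\eta}}_3}=0$, since $\cj\vec{\tilde{\eta}}_3=0$; your observation that the third component of $\cj\ch\vec{f}$ is the exact derivative $-\partial_x(-cf_3+f_4)$ and hence integrates to zero over $[-T,T]$ is literally the same computation, because integrating the third component \emph{is} the pairing with $(0,0,1,0)^T=\vec{\tilde{\eta}}_3$. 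Where you genuinely differ is in the bookkeeping, and to your advantage: the paper silently writes the right-hand side as $\mu\vec{\tilde{\eta}}_3$ alone, discarding possible $\ker(\cj\ch)$-components, whereas you carry the full decomposition $\kappa_1\Psi_1+\kappa_2\Psi_2+\kappa_3\vec{\tilde{\eta}}_1+\beta\vec{\tilde{\eta}}_3$ of an element of $\ker((\cj\ch)^2)$ and eliminate $\kappa_3$ first via the pairing with $(0,\phi,0,0)^T$ from Proposition \ref{prop:34}; your identification of the second component of $\vec{\tilde{\eta}}_1$ as $2T\cl_+^{-1}\phi$ is correct. This ordering matters: one computes $\dpr{\vec{\tilde{\eta}}_1}{\vec{\tilde{\eta}}_3}=-2T\left(2T+\frac{1}{1-c^2}\dpr{\cl_+^{-1}\phi}{\phi}\right)$, which does not vanish for these waves (it is nonzero by the elliptic-integral computations of Section \ref{sec:4}), so the test against $\vec{\tilde{\eta}}_3$ alone would not force $\beta=0$ if a $\vec{\tilde{\eta}}_1$-component were present. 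Thus your write-up supplies a step the paper's proof leaves implicit. Finally, your appeal to the hypothesis $\dpr{\cl_+^{-1}\phi}{\phi}\neq 0$ is legitimate even though Proposition \ref{prop:42} does not state it, since the description of $\ker((\cj\ch)^2)$ you invoke (Proposition \ref{prop:36}) already requires it.
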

  \begin{proof}
  	Consider the subspace $	\ker((\cj\ch)^3)\ominus \ker((\cj\ch)^2)$. That is, set up an equation
  	$$
  	\cj \ch f= \mu \vec{\tilde{\eta}}_3
  	$$
  	for some scalar $\mu$.
  	We test it against $ \vec{\tilde{\eta}}_3$. We obtain
  $$
  \mu =\dpr{\cj \ch f}{\vec{\tilde{\eta}}_3}=-
  \dpr{ \ch f}{\cj \vec{\tilde{\eta}}_3}=0,
  $$
  as $\cj \vec{\tilde{\eta}}_3=0$.
  This implies $\mu=0$, which establishes the claim.
  \end{proof}
\noindent We collect all the findings of this section in the following proposition.
\begin{proposition}
	\label{prop:50}
	Suppose that  $\dpr{\cl_+^{-1} \phi}{\phi}\neq 0$.  Then,
	$$
	gKer(\cj\ch)\ominus \ker(\ch)=span[ \vec{\tilde{\eta}}_1, \vec{\tilde{\eta}}_3].
	$$
\end{proposition}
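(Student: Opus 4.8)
The plan is to treat Proposition \ref{prop:50} as the assembly of the four preceding structural results, isolating the one genuine ingredient that is not yet explicit: the fact that the tower of iterated kernels stabilizes. First I would recall the definition $gKer(\cj\ch)=span\big[\bigcup_{l\geq 1}\ker((\cj\ch)^l)\big]$ together with the elementary nested inclusions
$$
\ker(\ch)\subseteq \ker(\cj\ch)\subseteq \ker((\cj\ch)^2)\subseteq \ker((\cj\ch)^3)\subseteq\cdots,
$$
where the leftmost inclusion holds because $\ch\vec{f}=0$ forces $\cj\ch\vec{f}=0$, and the remaining ones are immediate from $\ker(A^{l})\subseteq\ker(A^{l+1})$ applied to $A=\cj\ch$.

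The key step is to upgrade Proposition \ref{prop:42}, which only asserts the absence of a third-generation adjoint eigenvector, namely $\ker((\cj\ch)^3)=\ker((\cj\ch)^2)$, into permanent stabilization of the chain: $\ker((\cj\ch)^l)=\ker((\cj\ch)^2)$ for every $l\geq 2$. This is the standard ascent argument for a linear map $A=\cj\ch$: once $\ker(A^{n+1})=\ker(A^{n})$ for some $n$ (here $n=2$), any $\vec{f}$ with $A^{n+2}\vec{f}=0$ satisfies $A\vec{f}\in\ker(A^{n+1})=\ker(A^{n})$, hence $\vec{f}\in\ker(A^{n+1})=\ker(A^{n})$, and induction closes the loop. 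Consequently $gKer(\cj\ch)=\ker((\cj\ch)^2)$, a finite-dimensional space, which in particular justifies the finiteness hypothesis under which the matrix $\cd$ of Section \ref{sec:2.1} is defined.

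It then remains only to telescope the $\ominus$ decompositions already in hand. Proposition \ref{prop:27} gives $\ker(\cj\ch)=\ker(\ch)\oplus span[\vec{\tilde{\eta}}_1]$, while Proposition \ref{prop:36} gives $\ker((\cj\ch)^2)\ominus\ker(\cj\ch)=span[\vec{\tilde{\eta}}_3]$; combining these with the stabilization yields
$$
gKer(\cj\ch)=\ker(\ch)\oplus span[\vec{\tilde{\eta}}_1]\oplus span[\vec{\tilde{\eta}}_3],
$$
and subtracting $\ker(\ch)$ gives exactly $span[\vec{\tilde{\eta}}_1,\vec{\tilde{\eta}}_3]$. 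The only point demanding care, and the mild obstacle here, is conceptual rather than computational: Proposition \ref{prop:42} alone rules out merely one further generation, so the ascent lemma must be invoked explicitly to conclude that no higher iterates enlarge the generalized kernel. The standing nondegeneracy $\dpr{\cl_+^{-1}\phi}{\phi}\neq 0$ enters only through Propositions \ref{prop:34} and \ref{prop:36}, so no new hypothesis is required at this final assembly stage.
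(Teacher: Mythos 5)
Your proposal is correct and follows essentially the same route as the paper, which offers no separate argument for Proposition \ref{prop:50} beyond ``collecting the findings'' of Propositions \ref{prop:27}, \ref{prop:36} and \ref{prop:42}. Your explicit invocation of the standard ascent lemma --- upgrading the single equality $\ker((\cj\ch)^3)=\ker((\cj\ch)^2)$ of Proposition \ref{prop:42} to stabilization of the whole chain, so that $gKer(\cj\ch)=\ker((\cj\ch)^2)$ --- is exactly the step the paper leaves implicit, and it is a worthwhile detail to have spelled out.
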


\subsection{The Morse index of $\ch$}
We have the following proposition. 
\begin{proposition}
	\label{prop:17}
	For the solution $\phi$  given by \eqref{4.1},  we have that  $n(\ch)\leq 1$.	
\end{proposition}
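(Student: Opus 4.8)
The plan is to exploit the block structure of $\ch$ and reduce the count $n(\ch)$ to $n(\cl_+)$, which equals $1$ by Proposition~\ref{prop:5}. Writing $\vec U=(u_1,u_2,u_3,u_4)$ with last component mean-zero, the quadratic form splits as
\begin{equation*}
\dpr{\ch\vec U}{\vec U}=\dpr{\cl_- u_1}{u_1}+Q(u_2,u_3,u_4),\qquad Q=\dpr{\cl_- u_2}{u_2}+2\dpr{\phi u_2}{u_3}+\norm{u_3}^2-2c\dpr{u_3}{u_4}+\norm{u_4}^2 .
\end{equation*}
Since the first row and column of $\ch$ decouple and $\cl_-\geq 0$ by Proposition~\ref{prop:5}, the $u_1$ direction contributes no negative directions, so $n(\ch)$ equals the Morse index of the $3\times 3$ block form $Q$ in $(u_2,u_3,u_4)$, with $u_4$ constrained to be mean-zero.

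The key step is to diagonalize $Q$ by completing the square twice, using the hypothesis $1-c^2>0$ from Proposition~\ref{prop:1}. First eliminate $u_4$ via $\norm{u_4}^2-2c\dpr{u_3}{u_4}=\norm{u_4-cu_3}^2-c^2\norm{u_3}^2$, which leaves the positive coefficient $1-c^2$ in front of $\norm{u_3}^2$; then complete the square in $u_3$. The crucial algebraic fact is that the effective operator acting on $u_2$ is exactly $\cl_+$, because $\cl_+=\cl_--\frac{\phi^2}{1-c^2}$ (the identity already recorded in the proof of Proposition~\ref{prop:19}). The outcome is the exact form identity
\begin{equation*}
Q=\dpr{\cl_+ u_2}{u_2}+(1-c^2)\,\norm{u_3+\tfrac{\phi u_2}{1-c^2}}^2+\norm{u_4-cu_3}^2 ,
\end{equation*}
in which the last two squared terms are nonnegative since $1-c^2>0$.

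From this identity the bound is immediate. The map $(u_2,u_3,u_4)\mapsto\big(u_2,\,u_3+\tfrac{\phi u_2}{1-c^2},\,u_4-cu_3\big)$ is a bounded invertible change of variables, and on any subspace where $Q<0$ one must have $\dpr{\cl_+ u_2}{u_2}<0$; in particular the projection $(u_2,u_3,u_4)\mapsto u_2$ is injective on such a subspace (if $u_2=0$ then $Q\geq 0$). Hence the dimension of any negative subspace for $Q$ is at most $n(\cl_+)=1$, giving $n(\ch)\leq 1$. Equivalently, enlarging the admissible $u_4$ to all of $L^2$ can only increase the Morse index, and the unconstrained count is exactly $n(\cl_+)=1$; the mean-zero constraint on $h$ is what, in principle, could lower this to $0$, which is precisely why the statement asserts only $n(\ch)\leq 1$ at this stage.

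I expect the only genuine subtlety to be operator-theoretic bookkeeping rather than the algebra: because $\cl_-$ is unbounded, one must carry out the completion-of-square manipulations at the level of the quadratic form (form domain $H^1$ in $u_2$, $L^2$ in $u_3,u_4$) and check that the stated change of variables is a bounded bijection preserving the form domain, so that it genuinely preserves the index of inertia. The identification of the Schur complement with $\cl_+$ and the positivity furnished by $1-c^2>0$ are the two facts that make everything collapse, and both are already available from Propositions~\ref{prop:1} and~\ref{prop:5}.
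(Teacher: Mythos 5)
Your proof is correct and takes essentially the same route as the paper: decouple the $u_1$ component using $\cl_-\geq 0$, reduce the remaining $3\times 3$ block to $\cl_+$ via the Schur-complement identity $\cl_+=\cl_--\frac{\phi^2}{1-c^2}$ obtained by completing squares in $u_4$ and then $u_3$, and conclude from $n(\cl_+)=1$. The only cosmetic differences are that you keep the completed squares as an exact form identity and finish by noting the projection onto $u_2$ is injective on any negative subspace, whereas the paper discards the nonnegative terms as inequalities and finishes by restricting $u$ to the orthogonal complement of the ground state of $\cl_+$; the substance is identical.
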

{\bf Remark:} We later easily establish that in fact $n(\ch)=1$, as a consequence of the index counting formula \eqref{b:20}. However, and again from the same formula, we have the spectral stability of the waves in the case, when $n(\ch)=0$. 
\begin{proof}
 We begin with the observation that due to the tensorial structure of $\ch=\cl_-\otimes\ch_1$, where
	$$
	\ch_1=\begin{pmatrix} \cl_- & \phi &0 \\ \phi &1&-c \\ 0&-c&1 \end{pmatrix}.
$$
Clearly, we have that $n(\ch)=n(\ch_1)+n(\cl_-)$.
By Proposition \ref{prop:5}, we have  that  $\cl_-\geq 0$. Thus, it remains to prove  that $n(\ch_1)\leq 1$. 
To this end,
consider the    quadratic form associated to $\ch_1$, namely
$$
q_1(u,v,w)=\langle \cl_- u,u\rangle+2\langle \phi u, v\rangle+\langle v,v\rangle-2c\langle v,w\rangle+\langle w,w\rangle.
$$
	By Cauchy-Schwartz inequality
	\begin{equation}
		\label{q:20}
			\langle v,v\rangle-2c\langle v,w\rangle+\langle w,w\rangle\geq (1-c^2)\langle v,v\rangle,
	\end{equation}
	whence, $q_1(u,v,w)\geq q_2(v,w)=\dpr{\cl_ - u}{u}+2\langle \phi u, v\rangle+(1-c^2)\langle v,v\rangle$. We further estimate $q_2$ as follows
	\begin{equation}\label{q:20a}
\begin{array}{ll}
		q_2(u,v)&= \langle \cl_- u,u\rangle+2\langle \phi u,v\rangle+(1-c^2)\langle v,v\rangle \\
		&= \langle \cl_- u,u\rangle+\int_{-T}^{T}{\left( \sqrt{1-c^2}v+\frac{\phi}{\sqrt{1-c^2}}u\right)^2 }dx-\frac{1}{1-c^2}\int_{-T}^{T}{\phi^2u^2}dx\\
		&\geq  \langle  \left(\cl_- -\frac{\phi^2}{1-c^2}\right)u,u\rangle =\langle \cl_+ u,u\rangle.
	\end{array}
\end{equation}

	It is now easy to conlcude that $n(\ch_1)\leq 1$. Indeed, based on the fact that $n(\cl_+)=1$, taking $u$ orthogonal to the ground state\footnote{this function is in fact explicitly known in both cases} of $\cl_+$ guarantees that $\langle \cl_+ u,u\rangle\geq 0$. Thus,
	$$
	\inf_{u\perp \textup{ground state of }\ \cl_+} q_1(u,v,w)\geq \inf_{u\perp \textup{ground state of }\ \cl_+} q_2(u,v)\geq \inf_{u\perp \textup{ground state of }\ \cl_+} \langle \cl_+ u,u\rangle\geq 0.
	$$
	This establishes $n(\ch_1)\leq 1$. 

%	In order to see that $\ch_1$ has after all a negative eigenvalue, it is enough to choose $u=\phi_0=\textup{ground state of }\ \cl_+$ and then select $v, w$ to make sure that all the inequalities above are equaltities. Indeed, the choice
%	$$
%	v=-\f{\phi}{1-c^2} \phi_0, \ \ w=-c \f{\phi}{1-c^2} \phi_0
%	$$
%	ensures that
%	$$
%	q_1(u,v,w)=\dpr{\cl_+ \phi_0}{\phi_0}<0,
%	$$
%	whence $n(\ch_1)=1$.
%Proposition \ref{prop:17} is proved in full.
\end{proof}

  \section{Proof of Theorem \ref{thm1}}
  \label{sec:4}
 We apply the instability index theory, as developed in Section \ref{sec:2}. To this end, we need to identify first the spaces $X, X^*, H$. To this end, introduce $X=(H^1_{per.}[-T,T])^2\times (L^2[-T,T])^2$, with 
  $X^*=(H^{-1}_{per.}[-T,T])^2\times (L^2[-T,T])^2$, while clearly we use the base Hilbert space $H=(L^2[-T,T])^4$. Clearly, $\ch:X\to X^*$ is a bounded and symmetric, while $\cj:D(\cj)=(H^1[-T,T])^4\to X$ is a closed operator.

  By Proposition \ref{prop:50}, $\cd\in \cm_{2\times 2}$, and we take on computing its elements. A direct calculation yields
  $$
  \ch \vec{\tilde{\eta}}_3 = \left(\begin{array}{c} 0 \\ \phi \\ 1 \\ -c\end{array}\right).
  $$
  Next, we compute  $\ch\vec{\tilde{\eta}}_1$, by using  the formula \eqref{c:30}, and   the values
  $
  \ch \vec{\eta}_1=\left(\begin{array}{c} 0 \\ 0 \\ 1 \\ 0\end{array}\right),
  \ch \vec{\eta}_2 = \left(\begin{array}{c} 0 \\ 0 \\ 0 \\ 1\end{array}\right),
  $
  which we have by construction. 
  We obtain
  $$
  \ch\vec{\tilde{\eta}}_1 = \left(\begin{array}{c} 0 \\ 0 \\ -\left(\langle 1,1\rangle+\frac{c^2}{1-c^2}\langle \mathcal{L}_+^{-1}\phi, \phi\rangle \right)\\ \left(c\langle 1,1\rangle+\frac{c}{1-c^2}\langle \mathcal{L}_+^{-1}\phi, \phi\rangle\right) \end{array}\right), 
  $$
  After some algebraic manipulations (note $\dpr{1}{1}=2T$),  we obtain the matrix $\cd$ in the following form
  $$\begin{array}{ll}
  	\cd_{11}=\dpr{\ch \vec{\tilde{\eta}}_1}{\vec{\tilde{\eta}}_1}=
  2 T \left( 2 T+\frac{c^2}{1-c^2}\langle \mathcal{L}_+^{-1}\phi, \phi\rangle \right) 
  \left( 2T +\frac{1}{1-c^2}\langle \mathcal{L}_+^{-1}\phi, \phi\rangle\right) \\
  	\\
  	\cd_{12}=\cd_{21}=\dpr{\ch \vec{\tilde{\eta}}_1}{\vec{\tilde{\eta}}_3}= - 2 T 
  	\left(2T+\frac{c^2}{1-c^2}\langle \mathcal{L}_+^{-1}\phi, \phi\rangle \right) \\
  	\\
  	\cd_{22}=\dpr{\ch \vec{\tilde{\eta}}_3}{\vec{\tilde{\eta}}_3}= 2 T.
  \end{array}
  $$
    It follows that
  $$
  \det(\cd)= 4 T^2 \dpr{\cl_+^{-1} \phi}{\phi} \left(2 T +\f{c^2}{1-c^2}\dpr{\mathcal{L}_+^{-1}\phi}{\phi}\right)
  $$
 Recall that we have established that $n(\ch)\leq 1$, see Proposition \ref{prop:17}.  Due to the formula \eqref{170}, it must be that $n(\cd)\leq 1)$. Thus, the spectral stability is equivalent to the property $n(\cd)=1$. In fact, due to the formula \eqref{b:20}, it follows that $n(\ch)\geq n(\cd)$, so establishing $n(\cd)=1$ implies $n(\ch)=1$ as well, as announced earlier. 
 
Next,  in order to prove $n(\cd)=1$,  we shall need to show that $\det(\cd)<0$, as this gurantees that $\cd$ has a negative eigenvalue.
 To this end, recall that we have  $\mathcal{L}_+\phi'=0 $. 
               The function
      $$\Phi(x)=\phi'(x)\int^{x}{\frac{1}{\phi'^2(s)}}ds, \; \; \left| \begin{array}{cc} \phi'& \Phi \\ \phi'' & \Phi'\end{array}\right|=1 $$
      is also solution of $ L\Phi=0 $.
      Formally, since $\Phi'$ has   zeros  using the identities
        $$
        \frac{1}{sn^2(y,\kappa)}=-\frac{1}{dn(y, \kappa)}\frac{\partial}{\partial_y}\frac{cn(x, \kappa)}{sn(y, \kappa)}$$
        and integrating by parts we get
       $$
        \Phi(x) =\frac{1}{\alpha^2\kappa^2\varphi_0}\left[\frac{1-2sn^2(\alpha x, \kappa )}{dn (\alpha x, \kappa) }
        - \alpha \kappa^2sn(\alpha x, \kappa)cn(\alpha x, \kappa)  \int_{0}^{x}{\frac{1-2sn^2(\alpha s, \kappa )}{dn^2(\alpha s, \kappa)}}ds\right].
       $$
       Thus, we may construct Green function
         $$\mathcal{L}_+^{-1}f=\phi'\int_{0}^{x}{\Phi(s)f(s)}ds-\Phi(s)\int_{0}^{x}{\phi'(s)f(s)}s+C_f\Phi(x), $$
         where $C_f$ is chosen such that $\mathcal{L}_+^{-1}f$ is periodic with same period as $\phi(x)$.
  Integration by parts, we get
      $$ \langle \mathcal{L}_+^{-1}\phi,\phi \rangle=-\langle\phi^3, \Phi\rangle+\phi^2(T)\langle\phi,\Phi\rangle-\frac{\phi''(T)}{2\Phi'(T)}\langle\phi,\Phi\rangle^2.$$
         Similarly as in  \cite{DK}, using that $\phi^3=-(1-c^2)\mathcal{L}\phi$ and $\langle \Phi'',\phi\rangle=2\phi(T)\Phi'(T)+\dpr{\Phi}{\phi''}$, we have the following results,
      $$\langle \mathcal{L}_+^{-1}\phi,\phi \rangle =2(1-c^2)\phi(T)\Phi'(T)+\phi^2(T)\langle\phi,\Phi\rangle-\frac{\phi''(T)}{2\Phi'(T)}\langle\phi,\Phi\rangle^2.$$
      Integrating by parts, we get
      $$\Phi'(T)\phi(T)=\frac{1}{\alpha \kappa^2}[2(1-\kappa^2)K-(2-\kappa^2)E].$$
$$\langle \phi, \Phi\rangle=\frac{1}{\alpha^3 \kappa^2}[E(\kappa)-K(\kappa)].$$
$$\frac{\phi''(T)}{2\Phi'(T)}\langle \phi, \Phi\rangle=\frac{\phi_0^2\kappa^2(1-\kappa^2)[E-K]}{2[2(1-\kappa^2)K-(2-\kappa^2)E]}.$$
which leads to the following result
  \begin{equation}\label{DK1}
  \langle \mathcal{L}_+^{-1}\phi,\phi \rangle=-\frac{2(1-c^2)}{\alpha}\frac{E^2(\kappa)-(1-\kappa^2)K^2(\kappa)}{(2-\kappa^2)E(\kappa)-2(1-\kappa^2)K(\kappa)}<0.
  \end{equation}
      For the other term, we have 
      $$\begin{array}{ll} 2T+\frac{c^2}{1-c^2}\langle \mathcal{L}_+^{-1}\phi, \phi\rangle &=\frac{2}{\alpha}\left[ K(\kappa)-c^2\frac{E^2(\kappa)-(1-\kappa^2)K^2(\kappa)}{(2-\kappa^2)E(\kappa)-2(1-\kappa^2)K(\kappa)}\right]\\
      \\
      &=\frac{2}{\alpha}\frac{E^2(\kappa)-(1-\kappa^2)K^2(\kappa)}{(2-\kappa^2)E(\kappa)-2(1-\kappa^2)K(\kappa)}\left[ \frac{(2-\kappa^2)E(\kappa)K(\kappa)-2(1-\kappa^2)K^2(\kappa)}{E^2(\kappa)-(1-\kappa^2)K^2(\kappa)}-c^2\right]>0.
      \end{array}$$
since 
$$
\frac{(2-\kappa^2)E(\kappa)K(\kappa)-2(1-\kappa^2)K^2(\kappa)}{E^2(\kappa)-(1-\kappa^2)K^2(\kappa)}>1>c^2.
$$
All together, $\det(\cd)<0$ and the spectral stability is established. 
\\ 
\\ 
{\bf Acknowledgements:} 	Stanislavova is partially supported by the NSF, under award \# 2210867. Stefanov is partially supported by the NSF, under award \# 2204788.

\end{document}